\newtheorem{theorem}{Theorem}[section]
\newtheorem{defi}[]{Definition}[section]
\newtheorem{prop}{Proposition}[section]
\newtheorem{question}{Question}
\numberwithin{question}{section}
\newtheorem{answer}{Answer}[section]
\newtheorem{conj}[theorem]{Conjecture}
\newtheorem*{T1}{Theorem~\ref{thm:denom-deg}}
\newtheorem{remark}[theorem]{Remark}
\newtheorem{ex}{Example}
\newtheorem{claim}{Claim}[section]
\newtheorem{corr}{Corollary}[claim]
\newcommand{\partition}[1]{\small \begin{matrix}#1\end{matrix}}
\newcommand{\play}[1]{\langle {#1} \rangle}
\newcommand{\opp}{\mathrm{op}}
\newcommand{\level}{\mathrm{level}}
\newcommand{\rebs}[1]{$\mathcal{O}^{op}_{#1}$}
\tikzstyle{whitebead} = [circle, ball color=white,minimum size=3mm]
\tikzstyle{blackbead} = [circle, ball color=black,minimum size=3mm]
\tikzstyle{circ} = [circle,draw=white,fill=black,minimum size=3mm,scale=0.5]
\tikzstyle{tri} = [circle,rotate=270,draw=white,fill=red, minimum size=3mm,scale=0.5]
\tikzstyle{rec} = [circle, draw=white, fill=blue, minimum size=3mm, scale = 0.5]
\tikzstyle{diam} = [diamond, draw=black, fill=white, minimum size=3mm, scale=0.33]
\tikzstyle{str} = [circle, draw=green, fill=green, minimum size=3mm, scale=0.33]
\tikzstyle{brace} = [
\title{Limiting behavior in growth of Bulgarian solitaire orbits}
\author{Nhung Pham\footnote{Email: \href{mailto:pham0306@umn.edu}{pham0306@umn.edu}} \\ School of Mathematics, University of Minnesota - Twin Cities \\ Minneapolis, MN 55455}
\date{}
\begin{document}

\maketitle

\begin{abstract}
    The Bulgarian Solitaire rule induces a finite dynamical system on the set of integer partitions of $n$. Brandt \cite{brandt1982cycles} characterized and counted all cycles in its recurrent set for any given $n$, with orbits parametrized by necklaces of black and white beads. However, the transient behavior within each orbit has been almost completely unknown. The only known case is when $n=\binom{k}{2}$ is a triangular number, in which case there is only one orbit. Eriksson and Jonsson \cite{eriksson2017level} gave an analysis for convergence of the structure as $k$ grows, and to what extent the limit applied to the finite case. 
    
    In this article, we generalize the convergent structure for orbits of Bulgarian Solitaire system for any $n$. For necklaces of the form $(BW)^k = BWBW\cdots$, we give the precise limit of the generating functions as $k$ grows. For other necklaces, we prove that the generating functions are rational and provide a bound for their denominator and numerator degrees.
\end{abstract}

\vfill

\tableofcontents
\vfill

\pagebreak

\section{Introduction and Notation}\label{sec:1}

\subsection{The Bulgarian Solitaire system}

 The game of Bulgarian Solitaire (BS) was introduced by Martin Gardner in 1983. The original game starts with $45$ cards divided into a number of piles. Now keep repeating the \emph{Bulgarian Solitaire moves}: in each turn, take one card from each pile and form a new pile. The game ends when the sizes of the piles are not changed by performing the moves. Surprisingly, it turns out that regardless of initial state of the game, it must end in a finite number of moves at the state with one pile of one card, one pile of two cards, \ldots, and one pile of nine cards. The rule was then generalized for any $n$ as the Bulgarian Solitaire operation $\beta$ on the set of partition $\mathcal{P}(n) \coloneqq \{ \lambda = (\lambda_1, \lambda_2, \ldots, \lambda_m) : \text{length } l(\lambda) = m, \lambda_1 \ge \lambda_2 \ge \ldots \ge \lambda_m > 0 \text{ are integers}, \lambda_1 + \ldots + \lambda_m = n\}$. The operation $\lambda \rightarrow \beta(\lambda)$ is described as: in each step, take one from each part, form a new part and put the parts in weakly decreasing order. In addition, the game can be described in terms of Young diagrams: in each turn, remove the longest column and reinsert it as a new row into the diagram. An example is shown in Figure~\ref{fig:BS-ex}.

%example or BS operation
\begin{figure}[H]
    \centering
    \begin{tikzpicture}
    \node[] at (0, 0) {\ytableausetup{centertableaux}
        \ytableaushort
        {\none,\none,\none}
        * {5,2,2}
        * [*(lightgray)]{1,1,1}};
    \draw[->, black, thick] (2, 0) -- (3.5, 0);
    \node[] at (2.75, 0.25) {$\beta$};
    \node[] at (5, 0) {
         \ytableausetup{centertableaux}
        \ytableaushort
        {\none,\none,\none}
        * {4,3,1,1}
        * [*(lightgray)]{0,3}
        };
         
    \end{tikzpicture}
    \caption{Bulgarian Solitaire on Young diagram $\beta((5,2,2)) = (4,3,1,1)$}
    \label{fig:BS-ex}
\end{figure}
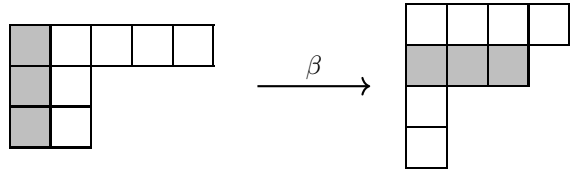

The \emph{game graph} of Bulgarian Solitaire system is a directed graph whose vertices are partitions of $n$ with directed edges connecting $\lambda \rightarrow \beta(\lambda)$. Some examples of BS \emph{game graph} are given in Figure~\ref{fig:BS-6} and Figure~\ref{fig:BS-8}. 

%example of a game graph n = 6
    \begin{figure}[H]
        \centering
        \begin{tikzpicture}
            \node[label=270:{$({3,2,1})$}] (1) at (0,11) {};
            \node[label={$({4,2})$}] (2) at (0,9) {};
            %first branch
            \node[label=$({3,1,1,1})$] (3) at (-1.5,8) {};
            \node[label=$({2,2,2})$] (4) at (-1.5,7) {};
            \node[label=$({3,3})$] (5) at (-1.5,6) {};
            \node[label=$({4,1,1})$] (6) at (-1.5,5) {};
            \node[label=$({2,2,1,1})$] (7) at (-1.5,4) {};
            %second branch
            \node[label=$({5,1})$] (8) at (1.5,8) {};
            \node[label=$({2,1,1,1,1})$] (9) at (0.5,7) {};
            \node[label=$({6})$] (10) at (2.5,7) {};
            \node[label=$({1,1,1,1,1,1})$] (11) at (2.5,6) {};
            %arrow
            \draw[->] (0,9.55)--(0,10.2);
            \draw[->] (-1.5,8.65)--(-0.25,9);
            \draw[->] (-1.5,7.65)--(-1.5,8.2);
            \draw[->] (-1.5,6.65)--(-1.5,7.2);
            \draw[->] (-1.5,5.65)--(-1.5,6.2);
            \draw[->] (-1.5,4.65)--(-1.5,5.2);
            \draw[->] (1.5,8.65)--(0.25,9);
            \draw[->] (0.5,7.65)--(1.25,8.2);
            \draw[->] (2.5,7.65)--(1.75,8.2);
            \draw[->] (2.5,6.65)--(2.5,7.2);
            \path[->, thick, black, -{Latex[length=2mm]}]
            (1) edge [out=135, in=45, loop] (1);
        \end{tikzpicture}
        \caption{Bulgarian Solitaire game graph for $n = 6.$}
        \label{fig:BS-6}
    \end{figure}
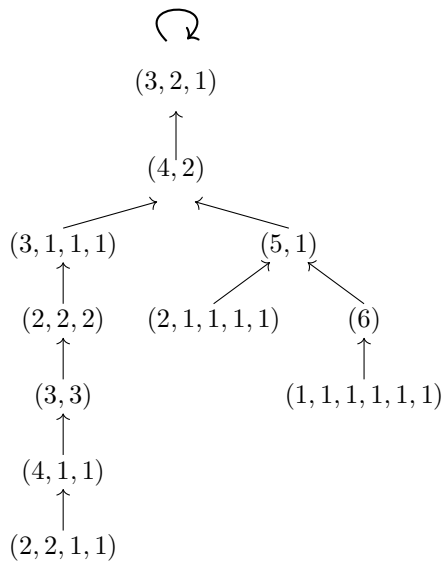
    
%example game graph n=8
\begin{figure}[hbt]
    \centering
    \begin{tikzpicture}
    %first component
        \node[] (1) at (0,10.5) {$({3,3,1,1})$};
        \node[] (2) at (0,9.5) {$({4,2,2})$};
        \node[label={$({5,3})$}] at (0,8) {};
        %first branch
        \node[label=$({4,1,1,1,1})$] at (-1.5,7) {};
        \node[label=$({2,2,2,2})$] at (-1.5,6) {};
        %second branch
        \node[label=$({6,1,1})$] at (1.5,7) {};
        \node[label=$({2,2,1,1,1,1})$] at (1.5,6) {};
    %second component
        \node[label={$(3,2,2,1)$}] at (5, 11) {};
        \node[label=$({4,2,1,1})$] at (8,11) {};
        \node[label=$({4,3,1})$] at (8,10) {};
        \node[label=$({3,3,2})$] at (5,10) {};
        %first branch
        \node[label=$({4,4})$] at (4.5,9) {};
        \node[label=$({5,1,1,1})$] at (4.5,8) {};
        \node[label=$({2,2,2,1,1})$] at (4.5,7) {};
        %second branch
        \node[label=$({5,2,1})$] at (8,9) {};
        \node[label=$({3,2,1,1,1})$] at (7,8) {};
        \node[label=$({6,2})$] at (9,8) {};
        \node[label=$({3,1,1,1,1,1})$] at (8,7) {};
        \node[label=$({7,1})$] at (10,7) {};
        \node[label=$({2,1,1,1,1,1,1})$] at (9,6) {};
        \node[label=$({8})$] at (11,6) {};
        \node[label=$({1,1,1,1,1,1,1,1})$] at (11,5) {};
        %arrow
        %first comp
        \draw[->] (1) to [out=225, in=135] (2);
        \draw[->] (2) to [out=45, in=-45] (1);
        \draw[->] (0,8.55)--(0,9.2);
        \draw[->] (-1.5,7.65)--(-0.25,8);
        \draw[->] (-1.5,6.65)--(-1.5,7.2);
        \draw[->] (1.5,6.65)--(1.5,7.2);
        \draw[->] (1.5,7.65)--(0.25,8);
        %second comp
        \draw[->] (6,11.5)--(7,11.5);
        \draw[->] (8,11.2)--(8,10.65);
        \draw[->] (7,10.5)--(6,10.5);
        \draw[->] (5,10.65)--(5,11.2);
        \draw[->] (4.5,9.65)--(4.8,10.2);
        \draw[->] (4.5,8.65)--(4.5,9.2);
        \draw[->] (4.5,7.65)--(4.5,8.2);
        \draw[->] (8,9.65)--(8,10.2);
        \draw[->] (7,8.65)--(7.75,9.2);
        \draw[->] (9,8.65)--(8.25,9.2);
        \draw[->] (8,7.65)--(8.75,8.2);
        \draw[->] (10,7.65)--(9.25,8.2);
        \draw[->] (9,6.65)--(9.75,7.2);
        \draw[->] (11,6.65)--(10.25,7.2);
        \draw[->] (11,5.65)--(11,6.2);
        \end{tikzpicture}
        \caption{Bulgarian Solitaire game graph for $n = 8.$}
        \label{fig:BS-8}
    \end{figure}

 As in any finite dynamical system, that is, any self-map $\beta: X \rightarrow X$ on a finite set $X$, the elements in any orbit $\mathcal{O}$ under repeated application of $\beta$ eventually lead to a {\it recurrent cycle} $\bm{C} \subset \mathcal{O}$, consisting of the elements $\lambda$ in $\mathcal{O}$ having
$\lambda=\beta^m (\lambda)$ for some $m \geq 1$.  When $\beta$ is the Bulgarian solitaire map, these recurrent cycles were analyzed by Brandt \cite{brandt1982cycles} in terms of objects called necklaces. A \emph{necklace} $N$ of black and white beads is an equivalence class of sequences of letters $\{B,W\}$ under cyclic rotation. We call $N$ a \emph{primitive necklace} if it cannot be written as a concatenation $N=P^k=PP\cdots P$ of copies of another necklace $P$. We will reserve $P$ for \emph{primitive necklaces}. We also say a binary sequence \emph{encodes} a necklace class $N$ if the sequence represents an element of the class $N$ when we assign $1$ for $B$ and $0$ for $W$.
Brandt \cite{brandt1982cycles} showed there is a bijection
\begin{equation}
    \label{Brandt-map}
    \begin{aligned}
        \mathcal{O}: \text{ necklaces } &\longrightarrow \text{ BS orbits } \\
        N &\longmapsto \mathcal{O}_N
    \end{aligned}
\end{equation}
that maps a necklace to the orbit of Bulgarian Solitaire system which has the unique recurrent cycle $\bm{C}_N$ represented by the necklace. Specifically, if the necklace $N$ is of length $m$, a partition $\lambda$ is in the corresponding recurrent cycle $\bm{C}_N$ if its \emph{difference labelling} from the staircase $\Delta_{m-1} = (m-1, \ldots, 1, 0)$, defined as
\begin{equation}\label{eq:diff-label}
    \lambda^{-} = \lambda - \Delta_{m-1},
\end{equation}
encodes the necklace class $N$. Figure~\ref{fig:prim-necklace-bs} and Figure~\ref{fig:nonprim-necklace-bs} visualize the bijection.

%exmple necklace ->> orbit
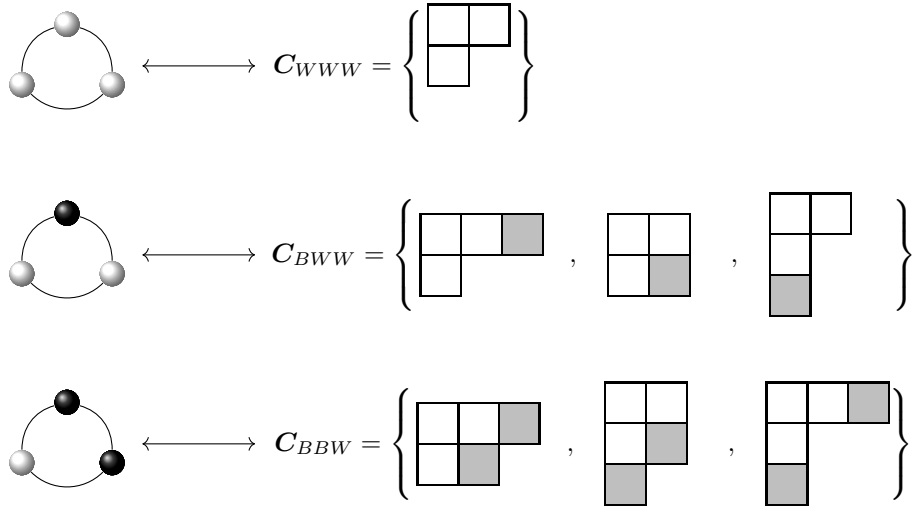
\begin{figure}[hbt]
    \centering
    \begin{tikzpicture}
        \begin{scope}[scale=1,yshift=0cm,xshift=0cm]
            \node[whitebead] (1) at (0,0.8) {};
            \node[whitebead] (2) at (0.6,0) {};
            \node[whitebead] (3) at (-0.6,0) {};
            \draw[-] (1) to [out=345,in=90] (2);
            \draw[-] (2) to [out=225,in=-45] (3);
            \draw[-] (3) to [out=90,in=195] (1);
            
            \draw[<->] (1, 0.25)--(2.5, 0.25);
            \node[label={}] at (4.5, 0.25) {$\bm{C}_{WWW} = \left\{\,\ytableausetup{centertableaux}
            \ytableaushort
            {\none,\none,\none}
            * {2,1}\, \right\}$};
        \end{scope}
        
        \begin{scope}[scale=1,yshift=-2.5cm,xshift=0cm]
            \node[blackbead] (1) at (0,0.8) {};
            \node[whitebead] (2) at (0.6,0) {};
            \node[whitebead] (3) at (-0.6,0) {};
            \draw[-] (1) to [out=345,in=90] (2);
            \draw[-] (2) to [out=225,in=-45] (3);
            \draw[-] (3) to [out=90,in=195] (1);
            
            \draw[<->] (1, 0.25)--(2.5, 0.25);
            \node[label={}] at (7, 0.25) {$\bm{C}_{BWW} = \left\{\,\ydiagram[*(lightgray)]
            {2+1,1+0}
            *[*(white)]{2,1}\quad,
            \quad\ydiagram[*(lightgray)]
            {2+0,1+1}
            *[*(white)]{2,1},
            \quad\ydiagram[*(lightgray)]
            {2+0,1+0,0+1}
            *[*(white)]{2,1}\right\}$};
        \end{scope}
        
        \begin{scope}[scale=1,yshift=-5cm,xshift=0cm]
            \node[blackbead] (1) at (0,0.8) {};
            \node[blackbead] (2) at (0.6,0) {};
            \node[whitebead] (3) at (-0.6,0) {};
            \draw[-] (1) to [out=345,in=90] (2);
            \draw[-] (2) to [out=225,in=-45] (3);
            \draw[-] (3) to [out=90,in=195] (1);
            
            \draw[<->] (1, 0.25)--(2.5, 0.25);
            \node[label={}] at (7, 0.25) {$\bm{C}_{BBW} = \left\{\,\ydiagram[*(lightgray)]
            {2+1,1+1}
            *[*(white)]{2,1}\quad,
            \quad\ydiagram[*(lightgray)]
            {2+0,1+1,0+1}
            *[*(white)]{2,1},
            \quad\ydiagram[*(lightgray)]
            {2+1,1+0,0+1}
            *[*(white)]{2,1}\right\}$};
        \end{scope}
    
\end{tikzpicture}
    \caption{The map $\mathcal{O}$ for necklaces of length $3$, which are $WWW$ (non-primitive) and $BWW, BBW$ (primitive).}
    \label{fig:prim-necklace-bs}
\end{figure}

%example necklaces not primitive - orbit
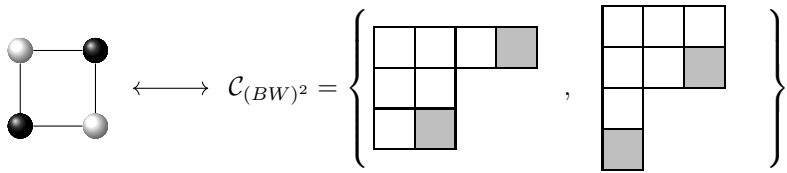
\begin{figure}[hbt]
    \centering
    \begin{tikzpicture}
        \begin{scope}[scale=1,yshift=0cm,xshift=0cm]     \node[blackbead] (1) at (0,0) {};
            \node[whitebead] (2) at (0,1) {};
            \node[blackbead] (3) at (1,1) {};
            \node[whitebead] (4) at (1,0) {};
            \draw[-] (1)--(2);
            \draw[-] (2)--(3);
            \draw[-] (3)--(4);
            \draw[-] (4)--(1);
            
            \draw[<->] (1.5, 0.5)--(2.5, 0.5);
            \node[label={}] at (6.5, 0.5) {$\mathcal{C}_{(BW)^2} = \left\{\,\ydiagram[*(lightgray)]
            {3+1,2+0,1+1}
            *[*(white)]{3,2,1}\quad,
            \quad\ydiagram[*(lightgray)]
            {3+0,2+1,1+0,0+1}
            *[*(white)]{3,2,1}\right\}$};
        \end{scope}
        
\end{tikzpicture}
    \caption{The map $\mathcal{O}$ for non-primitive necklaces of length $4$. The recurrent set in $\mathcal{O}_{(BW)^2}$} has only $2$ elements, shown above.
    \label{fig:nonprim-necklace-bs}
\end{figure}

We will also be interested in the {\it non-recurrent} elements $\lambda$ in $\mathcal{O}_N$,
and the distribution of the following {\it level} 
statistic on the orbit:
$$
\level(\lambda):=\min\{m \in \{0,1,2,\ldots\}: \beta^m(\lambda) \in \bm{C}_N \}.
$$
Our main results concern its generating function, which is defined as the following polynomial in $\mathbb{Z}[x]$:
\begin{equation*}
    \mathcal{D}_N(x):=\sum_{\lambda \in \mathcal{O}_N} x^{\level(\lambda)}.
\end{equation*}

 Note that Brandt's bijection implies that the Bulgarian solitaire system on $\mathcal{P}(n)$ for $n = \binom{k+1}{2}$ has only one orbit $\mathcal{O}_{W^{k+1}}$ with the recurrent set $\bm{C}_{W^{k+1}} = \{ \Delta_{k} \}$. Thus the game graph $\mathcal{O}_{W^{k+1}}$ turns out to be a tree, rooted at the vertex $\Delta_{k}$, and for any partition $\lambda \in \mathcal{O}_{W^{k+1}}$, the statistic $\level(\lambda)$ is the distance in the tree from $\lambda$ to the staircase $\Delta_{k}$. Eriksson and Jonsson prove \cite{eriksson2017level} that, in the limit as $k$ grows, the sequence of level sizes of $\mathcal{O}_{W^{k}}$ converges to the subsequence of evenly-indexed Fibonacci numbers $(F_{2d})_{d=0}^\infty$, with the generating function
\begin{equation}
\label{Eriksson-Jonsson-generating-function}
  \lim_{k \rightarrow \infty} \mathcal{D}_{W^k}(x) =   \dfrac{(1-x)^2}{1-3x+x^2} =: H_{W}(x).
\end{equation}
Eriksson and Jonsson also showed that for $\mathcal{O}_{W^{k+1}}$, the sizes of levels $0,1, \ldots, \lfloor \frac{k}{2} \rfloor$ in the Bulgarian solitaire game tree coincide with those of an object that they called the {\it quasi-infinite game tree} after pruning an appropriate branch (described later in this section). 

We generalize this, describing the limit of the level sizes for arbitrary $n$, with these two main results:

\begin{theorem}\label{thm:BW-limit-level}
    There is a power series $H_{BW}(x)$ in $\mathbb{Z}[[x]]$ such that
     \begin{equation*} 
     \lim_{k \rightarrow \infty} \mathcal{D}_{(BW)^k} = H_{BW}(x).
     \end{equation*}
    Moreover, $H_{BW}(x)$ is a rational function, given by
    \begin{equation*}
    \begin{aligned}
        H_{BW}(x) &= \dfrac{(x-1)^2(3x+2)}{x^3 - 3x^2 - x + 1}\\
        &= 2 + x + 3x^2 + 7x^3 + 15x^4 + 33x^5 + 71x^6 + 155x^7 + 335x^8 + \ldots
    \end{aligned}
    \end{equation*}
\end{theorem}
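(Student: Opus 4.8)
\noindent The plan is to adapt the Eriksson--Jonsson analysis of $\mathcal{O}_{W^k}$ to the two--element recurrent cycle of $(BW)^k$, in four steps. \emph{Step 1: the shape of the orbit.} I would first make $\bm{C}_{(BW)^k}$ explicit. The necklace $(BW)^k$ has exactly the two cyclic representatives $1010\cdots 10$ and $0101\cdots 01$, so by Brandt's bijection $\bm{C}_{(BW)^k}=\{c_1,c_2\}$, where $c_i$ is obtained from $\Delta_{2k-1}$ by adding the corresponding difference label; explicitly
\[
c_1=(2k,\,2k-2,\,2k-2,\,2k-4,\,2k-4,\,\dots,\,2,\,2),\qquad c_2=(2k-1,\,2k-1,\,2k-3,\,2k-3,\,\dots,\,1,\,1),
\]
with $\beta(c_1)=c_2$, $\beta(c_2)=c_1$, and $n=2k^2$. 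Thus $\mathcal{O}_{(BW)^k}$ is this $2$--cycle together with two rooted ``in--trees'' $T_1^{(k)},T_2^{(k)}$ with roots $c_1,c_2$, and $\level(\lambda)$ is the depth of $\lambda$ in whichever tree contains it, so $\mathcal{D}_{(BW)^k}(x)=\sum_{\lambda\in T_1^{(k)}}x^{\level(\lambda)}+\sum_{\lambda\in T_2^{(k)}}x^{\level(\lambda)}$. The trees are grown with the inverse of $\beta$, which is elementary: the preimages of $\mu=(\mu_1,\dots,\mu_t)$ are in bijection with the distinct part--values $c$ of $\mu$ satisfying $c\ge t-1$, the preimage attached to $c$ being $(\mu'_1+1,\dots,\mu'_{t-1}+1,\,1^{\,c-t+1})$, where $\mu'$ is $\mu$ with one copy of $c$ deleted.

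\emph{Step 2: stabilization of the level sizes.} The goal is to show that, for each fixed $d$, the coefficient $N_d(k):=[x^d]\,\mathcal{D}_{(BW)^k}(x)$ is eventually constant in $k$. As in Eriksson--Jonsson, a partition at depth $\le d$ from $c_i$ differs from $c_i$ only inside a region of bounded size near the head of its Young diagram, while the long ``doubled staircase'' tail of $c_i$ (of length $\Theta(k)$) is simply carried along by $\beta$ and $\beta^{-1}$; rephrasing $\beta$ and $\beta^{-1}$ as operations on difference labels, where they act in a rotation--like way on a bounded prefix, one obtains depth--preserving injections $T_i^{(k)}\hookrightarrow T_i^{(k+1)}$ that are bijective on the first $\Theta(k)$ levels. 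Hence $h_d:=\lim_{k\to\infty}N_d(k)$ exists for every $d$, the power series $H_{BW}(x):=\sum_{d\ge 0}h_d x^d\in\mathbb{Z}[[x]]$ is well defined, and $\lim_k\mathcal{D}_{(BW)^k}=H_{BW}$ coefficientwise; in particular $h_0=2$. (A direct computation gives, e.g., $\mathcal{D}_{(BW)^2}(x)=2+x+2x^2+2x^3$, and running a few more values of $k$ already exhibits the stabilizing prefix $2,1,3,7,15,\dots$.)

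\emph{Step 3: the linear recurrence --- the main obstacle.} What remains, and what I expect to be the genuinely hard part, is to describe the limit forest $T_1^{\infty}\sqcup T_2^{\infty}$ precisely enough to extract a finite self--similar structure. The target is to prove
\[
h_d=h_{d-1}+3h_{d-2}-h_{d-3}\qquad(d\ge 4),\qquad (h_0,h_1,h_2,h_3)=(2,1,3,7),
\]
the characteristic polynomial of the recurrence being $t^3-t^2-3t+1$. I would look for a finite set of ``local types'' of vertices --- three of them --- such that under $\beta^{-1}$ each type expands into a fixed multiset of types, encoded by a $3\times 3$ integer transfer matrix $M$ with $\mathrm{tr}\,M=1$, $e_2(M)=-3$, $\det M=-1$; then the vector of level--$d$ type populations is $M^d$ applied to an initial vector, and the generating function of the limit forest is rational with denominator $\det(I-xM)=x^3-3x^2-x+1$. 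The delicate points are: (i) isolating the correct three local types, the analogue of the Eriksson--Jonsson pruning step; (ii) handling the junction between $T_1^{\infty}$ and $T_2^{\infty}$ and the first three levels, which is why the recurrence only starts at $d\ge 4$ and the numerator has degree $3$; and (iii) controlling the tight inequality $c\ge t-1$ in $\beta^{-1}$ near parts equal to $1$, where spurious leaves could otherwise appear.

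\emph{Step 4: the closed form.} Once the recurrence and the four initial values are in hand, $H_{BW}$ is forced: solving $(x^3-3x^2-x+1)\,H_{BW}(x)=P(x)$ with $\deg P\le 3$ gives $P(x)=3x^3-4x^2-x+2=(x-1)^2(3x+2)$, hence the stated formula, whose expansion begins $2+x+3x^2+7x^3+15x^4+\cdots$; a final check against the brute--force values of $\mathcal{D}_{(BW)^k}$ for small $k$ confirms the initial conditions. Of the four steps, Steps 1 and 4 are bookkeeping, Step 2 is technical but follows the Eriksson--Jonsson template, and Step 3 --- producing the three--state self--similar description behind the irreducible cubic denominator --- is where the real work lies.
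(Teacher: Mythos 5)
Your Steps 1, 2 and 4 are sound bookkeeping (the explicit $c_1,c_2$, the value $n=2k^2$, the stabilization claim, and the numerics $2,1,3,7,15,\dots$ with recurrence $h_d=h_{d-1}+3h_{d-2}-h_{d-3}$ all agree with the paper), but the proposal has a genuine gap exactly where you flag it: Step 3 is announced rather than proved. You state the \emph{target} --- a three-state transfer matrix $M$ with $\det(I-xM)=x^3-3x^2-x+1$ --- and list the properties it would need, but you never exhibit the three local types, never verify that $\beta^{-1}$ acts on them by a fixed expansion rule, and never resolve the three ``delicate points'' you yourself list. Since the entire content of the theorem is the specific rational function, and that function is determined precisely by the self-similar structure you leave unconstructed, the proposal does not yet constitute a proof; it is a correct reduction of the problem to its hard core.

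For comparison, the paper does not use a vertex-type transfer matrix at all. It works in the reversed (quasi-infinite) forest with two roots $\gamma^{(1)}=(\langle 0\rangle,\langle 1\rangle)$ and $\gamma^{(2)}=(\langle 1\rangle,\langle 0\rangle,\langle 1\rangle)$, proves a structural lemma (Proposition~\ref{prop:BW-seq}: once $R_1$ is played only $R_1$ can be played, and the sequences $[23\cdots r\,1^r]$ return to a root), and then classifies all reverse playing sequences by their prefixes $[23\cdots r[\ge r]1^s]$. Each class contributes a shifted copy of $g_1$ or $g_2$, yielding the functional equation $g_2=1+(x+x^3+\cdots)g_1+(x+x^2+x^4+\cdots)g_2$ together with $g_1=1+xg_2$; solving gives $g_2=(-x^2+x+1)/(x^3-3x^2-x+1)$, and pruning the $[1\ldots]$ branch via $g=xg+H_{BW}$ gives the stated formula. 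The irreducible cubic thus emerges from summing geometric series of sequence types, not from a characteristic polynomial of a matrix. If you want to complete your route, you would still need to prove the analogue of Proposition~\ref{prop:BW-seq} to justify that only finitely many local behaviors occur; at that point the paper's prefix decomposition is arguably the more direct way to finish. Your Step 2 would also need the quantitative statement corresponding to Theorem~\ref{thm:fin-coincide} (coincidence of $\mathcal{O}^{\mathrm{op}}_{(BW)^{k+1}}$ with the pruned limit forest up to level $k$) rather than an appeal to the Eriksson--Jonsson template.
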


\begin{theorem}\label{thm:denom-deg}
    For primitive necklaces $P$ with $\abs{P} \geq 3$, there is a power series $H_P(x)$ in $\mathbb{Z}[[x]]$ such that the sequence of generating functions
    $(\mathcal{D}_{P^k})_{k=0}^\infty$
    converges to $H_P(x)$ coefficient-wise. Moreover, $H_P(x)$ is a rational function having denominator polynomial of degree at most $\abs{P}$ and numerator degree at most $2 \abs{P}$. 
\end{theorem}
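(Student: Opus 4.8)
Write $m=\abs{P}$. Since $P$ is primitive, the cyclic word $P^k$ has exactly $m$ distinct rotations for every $k\ge 1$, so by Brandt's bijection \eqref{Brandt-map} the recurrent cycle $\bm{C}_{P^k}=\{c_0,\dots,c_{m-1}\}$ has exactly $m$ elements, with $\beta(c_i)=c_{i+1}$ (indices mod $m$), \emph{independently of $k$}. Hence the game graph $\mathcal{O}_{P^k}$ is a cycle of length $m$ together with a reverse-Bulgarian-Solitaire in-forest: it is the disjoint union of rooted trees $T_0^{(k)},\dots,T_{m-1}^{(k)}$, where $T_i^{(k)}$ consists of the partitions whose forward $\beta$-orbit first meets the cycle at $c_i$, edges point toward the root $c_i$, the statistic $\level$ is the depth in $T_i^{(k)}$, and
\[
\mathcal{D}_{P^k}(x)=\sum_{i=0}^{m-1}\ \sum_{\lambda\in T_i^{(k)}}x^{\level(\lambda)}.
\]
The plan is to understand how these $m$ trees grow with $k$, take the limit, and extract a rational form.

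\textbf{Step 1: coefficient-wise convergence and the limiting forest.}
First I would show that for each fixed $d$ the number of partitions of $\mathcal{O}_{P^k}$ at level exactly $d$ is independent of $k$ once $k$ is large (roughly $k\gtrsim d$), by an analysis of $\beta$-preimages. The point, generalizing Eriksson and Jonsson's \emph{quasi-infinite game tree}, is that the combinatorial type of the depth-$\le d$ portion of the in-forest rooted at $\bm{C}_{P^k}$ is governed by bounded local data of the partitions involved and of $P$ (lengths of the leading columns, the location of the first few inner corners of the profile, etc.), and this data is frozen once $k$ exceeds a bound depending only on $d$ and $P$. One then obtains a $k$-independent limiting rooted forest $\mathcal{T}_P$, consisting of $m$ infinite trees, whose depth-$\le d$ part is isomorphic to that of $\mathcal{O}_{P^k}$ for all large $k$; setting $H_P(x):=\sum_{\lambda\in\mathcal{T}_P}x^{\level(\lambda)}\in\mathbb{Z}[[x]]$ gives $\mathcal{D}_{P^k}\to H_P$ coefficient-wise. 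Making ``bounded local data'' and its freezing precise for an \emph{arbitrary} primitive $P$ — where the $m$ trees of $\mathcal{T}_P$ need not be isomorphic and the reverse-$\beta$ branching rule is not cyclically symmetric — is the crux of the argument and the step I expect to be the main obstacle; \cite{eriksson2017level} handled only the single word $W$, and even there with effort.

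\textbf{Step 2: rationality and degree bounds.}
On the limiting forest $\mathcal{T}_P$ I would set up a finite linear system. Partitioning the vertices of $\mathcal{T}_P$ into finitely many ``states'' indexed by the bounded profile-data above (the $m$ cycle vertices being among them), one gets generating functions $F_1(x),\dots,F_s(x)$ satisfying
\[
F_j(x)=v_j(x)+x\sum_{l=1}^{s}M_{jl}(x)\,F_l(x),\qquad H_P(x)=\sum_{j=1}^{s}a_j F_j(x),
\]
with $v_j,M_{jl},a_j$ polynomials of bounded degree read off from the reverse-$\beta$ branching rule; equivalently $\vec F=(I-xM(x))^{-1}\vec v$, so Cramer's rule gives $H_P=N/\det\!\big(I-xM(x)\big)$, proving rationality. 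The degree bounds then reduce to showing that the system can be taken with exactly $s=m=\abs{P}$ states and $\deg\det\!\big(I-xM(x)\big)\le m$, and that the polynomial (non-proper) part of $H_P$ — which records the transient levels before the linear recursion governs — has degree $\le m$, so that $\deg N\le 2m=2\abs{P}$. The hypothesis $\abs{P}\ge 3$ is essential precisely for this state-compression: for $\abs{P}\le 2$ one provably needs more states, as witnessed by $P=W$ (limit denominator $1-3x+x^2$ of degree $2>1$, from \eqref{Eriksson-Jonsson-generating-function}) and $P=BW$ (limit denominator of degree $3>2$, from Theorem~\ref{thm:BW-limit-level}), whereas for $\abs{P}\ge 3$ there is enough slack that the extra states collapse. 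Carrying out this compression for $\abs{P}\ge 3$, together with the attendant determinant and numerator degree counts, is the remaining technical work.
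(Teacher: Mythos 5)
Your overall architecture — a limiting forest obtained by ``freezing'' the depth-$\le d$ structure for large $k$, followed by a finite linear system yielding rationality — matches the paper's strategy in outline, but both of the steps you defer are exactly where the content lies, and for Step 2 your proposed route does not obviously deliver the stated degree bounds. A generic transfer-matrix system $\vec F=(I-xM(x))^{-1}\vec v$ with $s$ states and polynomial entries $M_{jl}(x)$ gives a denominator $\det\bigl(I-xM(x)\bigr)$ of degree up to $s\cdot(1+\max_{j,l}\deg M_{jl})$, so even after compressing to $s=\abs{P}$ states you would still need to control the entry degrees; you assert this ``state compression'' can be done for $\abs{P}\ge 3$ but give no mechanism, and the heuristic you offer (that $W$ and $BW$ ``provably need more states'') is an observation about the answers, not an argument.

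The paper's actual mechanism is more specific and is worth contrasting with your plan. It first shows (Claim~5.2 in the paper) that every primitive $P$ with $\abs{P}\ge 3$ has two cyclically adjacent beads of the same color, hence some root $\gamma^{(t)}\in\mathcal{C}_P$ of the form $(\langle\sigma\rangle,0,\dots)$ with a \emph{unique} playable part; this is where the hypothesis $\abs{P}\ge 3$ genuinely enters (for $P=BW$ no such root exists, which is why that case needs the separate treatment of Theorem~\ref{thm:BW-limit-level}). It then shows (Claim~5.3) that every branch of every tree $G_s$ funnels, within at most $p-1$ steps, either into a leaf or into a copy of the special tree $G_t$, so each $g_s(x)=A(x)+B(x)g_t(x)$ with $\deg A,\deg B\le p-1$. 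Combined with $g_t=1+xg_{t+1}$ this collapses the whole system to a \emph{single scalar} equation $g_{t+1}=K+L+xLg_{t+1}$ with $\deg L\le p-1$, so the denominator is $1-xL(x)$ of degree $\le p$, and the numerator bounds follow by bookkeeping through $H_P=(1-x)\sum_s g_s$. If you want to salvage your Step 2, you should replace the unspecified ``compression'' by this funneling argument (or prove an equivalent structural fact forcing all entries of $M(x)$ into a single column up to low-degree corrections). Your Step 1 is essentially the program the paper carries out in Sections 2--4 (Propositions~4.1--4.4 on bracketing, tail replacement, and the eventual $k$-independence of prefixes), and you are right that it is the technically heavy part; as written it remains a gap in your proposal as well.
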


\subsection{The reversed Bulgarian Solitaire digraphs}

\begin{defi}[reversed BS rule] \rm
The forward Bulgarian solitaire map $\lambda \mapsto \beta(\lambda)$ has (partially defined) reverse maps, 
which we will denote $\lambda \xrightarrow{j} R_j(\lambda)$,
for which $\beta(R_j(\lambda))=\lambda$.  They are described
as follows:
    \begin{itemize}
        \item For Young diagrams: take out the $j{th}$ row if it is no shorter than the number of rows minus $1$ and insert it again as the leftmost column.
        \item For a partition $\lambda$: take a part $\lambda_j \geq l(\lambda)-1$, then distribute it into other parts, one for each.
    \end{itemize}
\end{defi}

For example $(4,2,2) \xrightarrow{1} (3,3,1,1)$ and $(4,2,2) \xrightarrow{2} (5, 3)$. \textbf{Figure~\ref{fig:tableaux-reverseBS}} illustrates the reverse rule.

%example of reversed BS
\begin{figure}[hbt]
    \centering
    \begin{tikzpicture}
        \node[] at (0,0) {\ytableausetup{centertableaux}
            \ytableaushort
            {\none,\none,\none}
            * {4,2,2}
            * [*(lightgray)]{4}
            * [*(gray)]{0,2}};
        \draw[->] (1.8, 0.35)--(3, 1);
        \node[] at (2.25, 1) {$1$};
        \draw[->] (1.8, -0.35)--(3, -1);
        \node[] at (2.25, -1) {$2$};
        \node[] at (4.5,1.5) {\ytableausetup{centertableaux}
            \ytableaushort
            {\none,\none,\none}
            * {3,3,1,1}
            * [*(lightgray)]{1,1,1,1}};
        \node[] at (5,-1.5) {\ytableausetup{centertableaux}
            \ytableaushort
            {\none,\none,\none}
            * {5,3}
            * [*(gray)]{1,1}};
    \end{tikzpicture}
    \caption{Reverse BS on a Young diagram}
    \label{fig:tableaux-reverseBS}
\end{figure}
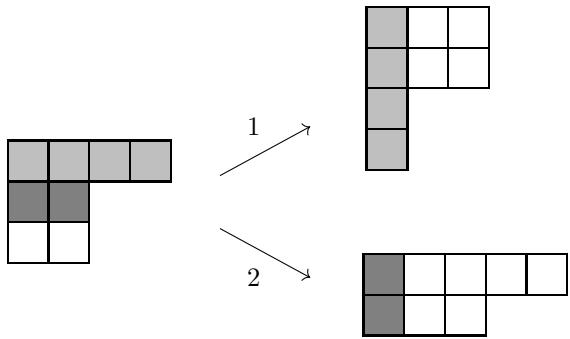

 We also use $\mathbf{0}$ as a result of an invalid move, that is, the removed row is too short to be the leftmost (longest) column. Obviously, if $\lambda$ is a partition, $R_j(\lambda)$ is valid (or legal) if and only if $\lambda_j \ge l(\lambda) -1$ and we display $\lambda_j$ in brackets, as $\langle \lambda_j \rangle$. If $\lambda \in \mathcal{O}_N$ where $N$ is a necklace of length $m$, then its \emph{difference labelling} $\lambda^-$ is defined the same as in \eqref{eq:diff-label}. Also, if $\lambda_j$ is bracketed, so is $(\lambda^-)_j$.
\begin{ex}[bracketing rule]\label{ex:diff-label}
    In $\mathcal{O}_{W^4}$ in Figure~\ref{fig:BS-6} we have 
    $$ \left( \partition{\play{3}\\ \play{2} \\ 1 } \right)^- = \partition{\play{0} \\ \play{0} \\ 0 \\ 0} \text{ and } \left( \partition{\play{5}\\ \play{1}} \right)^- = \partition{\play{2} \\ \play{-1} \\ -1 \\ 0}. $$
    In the left orbit $\mathcal{O}_{(BW)^2}$ of Figure~\ref{fig:BS-8} we have
    $$ \left( \partition{\play{4}\\ \play{2} \\ \play{2} } \right)^- = \partition{\play{1} \\ \play{0} \\ \play{1} \\ 0} \text{ and } \left( \partition{\play{4}\\ 1 \\ 1 \\ 1 } \right)^- = \partition{\play{1} \\ -1 \\ 0 \\ 1}. $$
\end{ex} 

 Note that a staircase has its last part to be $0$ while a partition requires all parts to be positive. Also, a \emph{difference labelling} does not necessarily have positive parts. The inverse is denoted 
    $$\mu^+ = \mu + \Delta_{m-1},$$ 
where $\mu$ is a difference labelling in the orbit $\mathcal{O}_N$ and $\mu^+$ must be an ordinary partition. For example, the inverses for difference labelings in Example~\ref{ex:diff-label} are
    $$ \left( \partition{\play{0} \\ \play{0} \\ 0 \\ 0}\right)^+ = \partition{\play{3}\\ \play{2}\\ 1} \,\text{ and } \left( \partition{\play{2}\\ \play{-1}\\-1 \\ 0} \right)^+ = \partition{\play{5} \\ \play{1}}.$$

 The reversed BS rule reverses all the arrows in $\mathcal{O}_N$ for some necklace $N$ to get the digraph $\mathcal{O}_N^\opp$. A \emph{(reverse BS) playing sequence} $\sigma$ from $\lambda$ is a sequence of parts that are consecutively played legally starting from $\lambda$. We use $R_\sigma(\lambda)$ to represent the result of performing $\sigma$ from $\lambda$.
\begin{ex}[(reverse BS) playing sequence] \rm 
    When reversing all the arrows in Figure~\ref{fig:BS-6} and Figure~\ref{fig:BS-8}, we get \rebs{W^4}, \rebs{(BW)^2} and \rebs{BBWW}, respectively. A playing sequence from $(3,2,1)$ in \rebs{W^4} is $[121]$ and $R_{[121]}((3,2,1)) = (2,1,1,1,1)$. A playing sequence from $(3,3,1,1)$ in \rebs{(BW)^2} can be $[1111\ldots]$, where $R_{[1^{2t}]}((3,3,1,1)) = (3,3,1,1)$ and $R_{[1^{2t+1}]}((3,3,1,1)) = (4,2,2)$ for integer $t$.
\end{ex}

For any primitive necklace $P$ of length $p$, there are $p$ elements in the recurrent set $\bm{C}_{P^k}$ for any $k$. We consider the difference labellings of the elements in the recurrent set $\bm{C}_P$, which are $\{0,1\}$-vectors of length $p$ that encode necklace class $P$. Their playable parts are bracketed following specific rules given in Section~\ref{sec:3}. Let that set be
    $$\mathcal{C}_P = \{ \gamma^{(t)} \in \bm{C}_P : R_1(\gamma^{(t)}) = \gamma^{(t+1)}, 1 \le t \le p \text{ and } \gamma^{(p+1)} \equiv \gamma^{(1)} \}.$$

\begin{ex}
\begin{equation*}
    \mathcal{C}_{BWW} = \{ BWW, WWB, WBW \} = \Bigg\{ \partition{\play{1} \\ \play{0} \\ 0}, \partition{\play{0}\\ 0\\1}, \partition{\play{0}\\ \play{1}\\0} \Bigg\}. 
\end{equation*}
\end{ex}

  For some vector $\alpha$, we will use $\alpha^k$ for the concatenation $\partition{\alpha \\ \vdots \\ \alpha}$ of $k$ blocks $\alpha$. Brandt's bijection \cite{brandt1982cycles} maps necklace $P^k$ to the orbit whose recurrent cycle $\mathcal{C}_{P^k} = \{ (\gamma^{(t)})^k : \gamma^{(t)} \in \mathcal{C}_P \}$.  We also use 
    $$ \mathcal{C}_{P^k}^+ := \{ \alpha^+ : \alpha \in \mathcal{C}_{P^k}\}$$
for the ordinary recurrent set (whose elements are the same as those of $\bm{C}_{P^k}$ but their playable parts are bracketed). The bracketing rule for this set will be discussed in Section~\ref{sec:3}.

  With the definition of the reversed rule in hand, we now discuss  Eriksson and Jonsson's \emph{quasi-infinite game tree} $\mathcal{F}_{W}$ for orbits \rebs{W^k}. \textbf{Figure~\ref{fig:reverse-BS-finite}} display some initial difference reversed BS game graphs up to some levels and \textbf{Figure~\ref{fig:inf-game-tree}} is the \emph{quasi-infinite game tree} $\mathcal{F}_W$. After pruning the branch formed by playing sequences $[1\ldots]$ and adding a self-cycle to the root, we can see the coincidence between the \emph{quasi-infinite tree} and the finite trees up to some levels.

%example of a reverse game graph n = 1, 3, 6
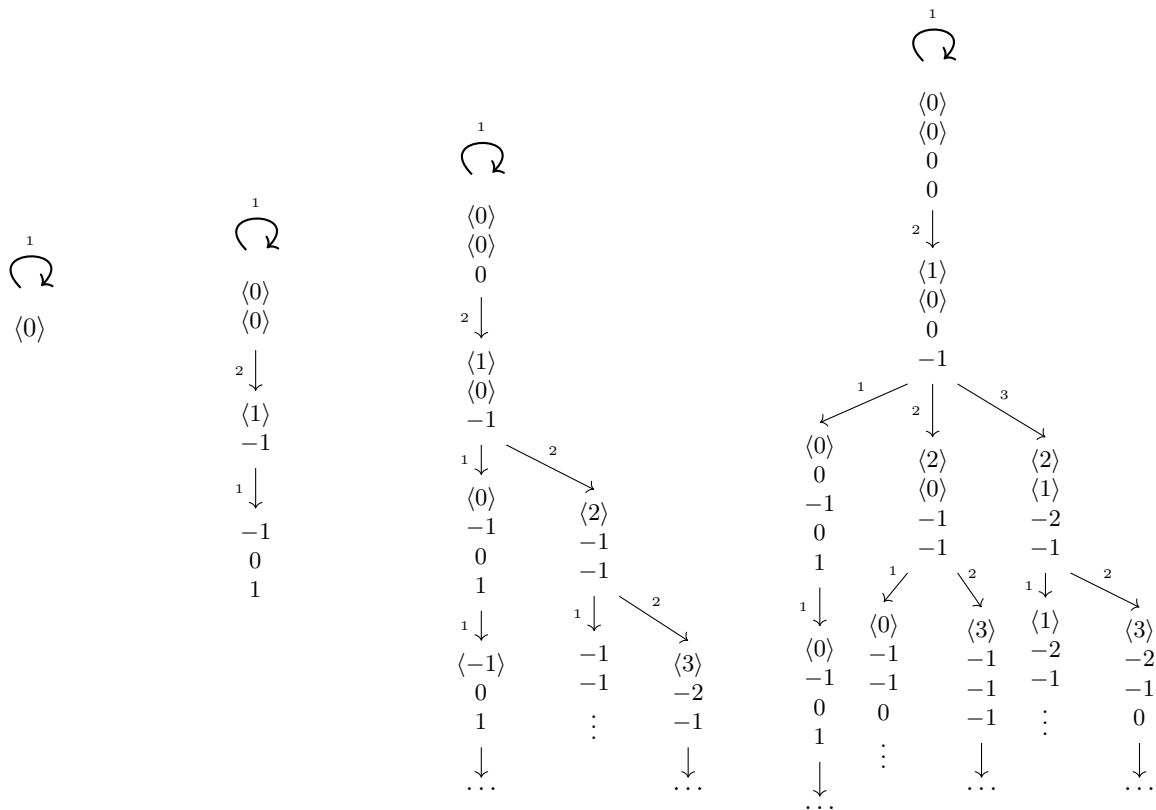
\begin{figure}[H]
    \centering
    \begin{tikzpicture}
    \begin{scope}[scale = 1, xshift=0cm, yshift=-0.5cm]
            \node[label=270:{$\play{0}$}] (1) at (0,11) {};
            \path[<-, thick, black, -{Latex[length=2mm]}]
            (1) edge [out=135, in=45, loop] (1);
            \node[] at (0, 11.75) {\tiny $1$};
    \end{scope}
    
    \begin{scope}[scale=1, xshift = 3cm, yshift = 0cm]
            \node[label=270:{$\partition{\play{0}\\ \play{0}}$}] (1) at (0,11) {};
            \node[] (2) at (0,8.75) {$\partition{\play{1}\\-1}$};
            \node[] (3) at (0,7) {$\partition{-1\\0\\1}$};
            %arrow
            \draw[<-] (2)--(0,9.8) node [pos = 0.5, left = 1pt] {\tiny $2$};
            \draw[<-] (3)--(2) node [pos = 0.5, left = 1pt] {\tiny $1$};
            \path[<-, thick, black, -{Latex[length=2mm]}]
            (1) edge [out=135, in=45, loop] (1);
            \node[] at (0, 11.75) {\tiny $1$};
    \end{scope}
    
    \begin{scope}[scale=1, xshift = 6cm, yshift = 0cm]
            \node[label=270:{$\partition{\play{0}\\ \play{0}\\ 0}$}] (1) at (0,12) {};
            \node[] (2) at (0,9.25) {$\partition{\play{1}\\ \play{0} \\ -1}$};
            %first branch
            \node[] (3) at (0,7.25) {$\partition{\play{0}\\ -1 \\ 0 \\ 1}$};
            \node[] (4) at (0,5.25) {$\partition{\play{-1} \\ 0 \\ 1}$};
            \node[] (5) at (0, 4) {$\ldots$};
            %second branch
            \node[] (8) at (1.5,7.25) {$\partition{\play{2} \\ -1 \\ -1}$};
            \node[] (9) at (1.5,5.25) {$\partition{-1\\ -1 \\ \vdots}$};
            \node[] (10) at (2.75,5.25) {$\partition{\play{3}\\-2\\-1}$};
            \node[] (11) at (2.75, 4) {$\ldots$};
            \draw[<-] (2)--(0,10.5) node [pos = 0.5, left = 1pt] {\tiny $2$};
            \draw[<-] (3)--(2) node [pos = 0.5, left = 1pt] {\tiny $1$};
            \draw[<-] (4)--(3) node [pos = 0.5, left = 1pt] {\tiny $1$};
            \draw[<-] (5)--(4);
            \draw[<-] (8.north)--(2.south east) node [pos = 0.5, right = 1pt, above = 1pt] {\tiny $2$};
            \draw[<-] (9.north)--(8) node [pos = 0.5, left = 1pt] {\tiny $1$};
            \draw[<-] (10.north)--(8.south east) node [pos = 0.5, right = 1pt, above = 1pt] {\tiny $2$};
            \draw[<-] (11)--(10);
            \path[<-, thick, black, -{Latex[length=2mm]}]
            (1) edge [out=135, in=45, loop] (1);
            \node[] at (0, 12.75) {\tiny $1$};
        \end{scope}
        
        \begin{scope}[scale=1, xshift = 12cm, yshift = -0.5cm]
            \node[label=270:{$\partition{\play{0}\\ \play{0}\\ 0 \\ 0}$}] (1) at (0,14) {};
            \node[] (2) at (0,10.75) {$\partition{\play{1}\\ \play{0} \\ 0 \\ -1}$};
            %first branch
            \node[] (3) at (-1.5,8.25) {$\partition{\play{0}\\ 0\\ -1 \\ 0 \\ 1}$};
            \node[] (4) at (-1.5,5.75) {$\partition{\play{0} \\-1 \\ 0 \\ 1}$};
            \node[] (5) at (-1.5, 4.25) {$\ldots$};
            %second branch
            \node[] (6) at (0,8.25) {$\partition{\play{2}\\ \play{0}\\ -1 \\ -1}$};
            \node[] (6a) at (-0.65,5.75) {$\partition{\play{0}\\ -1 \\ -1 \\ 0 \\ \vdots}$};
            \node[] (6b) at (0.65,6) {$\partition{\play{3} \\ -1 \\ -1 \\ -1}$};
            \node[] (7) at (0.65, 4.5) {$\ldots$};
            %third branch
            \node[] (8) at (1.5,8.25) {$\partition{\play{2} \\ \play{1}\\ -2 \\ -1}$};
            \node[] (9) at (1.5,6) {$\partition{\play{1}\\ -2 \\ -1 \\ \vdots}$};
            \node[] (10) at (2.75,6) {$\partition{\play{3}\\-2\\-1\\0}$};
            \node[] (11) at (2.75, 4.5) {$\ldots$};
            \draw[<-] (2)--(0,12.15) node [pos = 0.5, left = 1pt] {\tiny $2$};
            \draw[<-] (3.north)--(2.south west) node [pos = 0.5, left = 1pt, above=1pt] {\tiny $1$};
            \draw[<-] (4)--(3) node [pos = 0.5, left = 1pt] {\tiny $1$};
            \draw[<-] (5)--(4);
            \draw[<-] (6)--(2) node [pos = 0.5, left = 1pt] {\tiny $2$};
            \draw[<-] (6a.north)--(6.south west) node [pos = 0.5, left = 1pt, above=1pt] {\tiny $1$};
            \draw[<-] (6b.north)--(6.south east) node [pos = 0.5, right = 1pt, above=1pt] {\tiny $2$};
            \draw[<-] (7)--(6b);
            \draw[<-] (8.north)--(2.south east) node [pos = 0.5, right = 1pt, above = 1pt] {\tiny $3$};
            \draw[<-] (9.north)--(8) node [pos = 0.5, left = 1pt] {\tiny $1$};
            \draw[<-] (10.north)--(8.south east) node [pos = 0.5, right = 1pt, above = 1pt] {\tiny $2$};
            \draw[<-] (11)--(10);
            \path[<-, thick, black, -{Latex[length=2mm]}]
            (1) edge [out=135, in=45, loop] (1);
            \node[] at (0, 14.75) {\tiny $1$};
        \end{scope}
    \end{tikzpicture}
    \caption{Difference reversed BS game graph for $n = \binom{k+1}{2}$ with $k = 1, 2, 3, 4$ up to level $\lfloor \frac{k+1}{2} \rfloor + 1$.}
    \label{fig:reverse-BS-finite}
\end{figure}

\begin{figure}[hbt]
    \centering
    \begin{tikzpicture}
    \node[] at (4, 0) {$\partition{\play{0} \\ \play{0}}$};
        %branch 2.1
        \draw[->] (3.85, -0.75)--(3, -1.25);
        \node[] at (3.2, -0.9) {\tiny $1$};
        \node[] at (2.75, -2) {$\partition{\play{0} \\ \play{0}}$};
        \draw[->] (2.75, -3)--(2.75, -3.5);
        \node[] at (2.75, -4) {$\ldots$};
        %branch 2.2
        \draw[->] (4.15, -0.75)--(5, -1.25);
        \node[] at (4.8, -0.9) {\tiny $2$};
        \node[] at (5, -2.25) {$\partition{\play{1} \\ \play{0} \\ \play{0}}$};
        %branch 2.2.1
        \draw[->] (4.85, -3.25)--(4, -3.75);
        \node[] at (4.15, -3.45) {\tiny $1$};
        \node[] at (4, -4.7) {$\partition{\play{0} \\ 0}$};
        \draw[->] (4, -5.5)--(4,-6);
        \node[] at (4, -6.7) {$\partition{\play{0} \\ \play{0}}$};
        \draw[->] (4, -7.25)--(4, -7.75);
        \node[] at (4, -8.25) {$\ldots$};
        %branch 2.2.2
        \draw[->] (5, -3.25)--(5, -3.75);
        \node[] at (4.85, -3.45) {\tiny $2$};
        \node[] at (5, -4.7) {$\partition{\play{2} \\ \play{0} \\ \play{0}}$};
        \draw[->] (4.85, -5.5)--(4.45, -6);
        \node[] at (4.5, -5.75) {\tiny $1$};
        \node[] at (4.5, -6.7) {$\partition{0 \\ 0}$};
        \draw[->] (5.15, -5.5)--(5.55, -6);
        \node[] at (5.15, -5.75) {\tiny $2$};
        \draw[->] (5, -5.5)--(5, -6);
        \node[] at (5, -6.85) {$\partition{\play{3} \\ \play{0} \\ \play{0}}$};
        \draw[->] (5, -7.5)--(5, -8);
        \node[] at (5, -8.5) {$\ldots$};
        \node[] at (5.65, -5.75) {\tiny $3$};
        \node[] at (5.65, -7) {$\partition{\play{3} \\ \play{1} \\ \play{0}\\ \play{0}}$};
        \draw[->] (5.65, -8)--(5.65, -8.5);
        \node[] at (5.65, -9) {$\ldots$};
        %branch 2.2.3
        \draw[->] (5.15, -3.25)--(7.15, -3.75);
        \node[] at (6.5, -3.45) {\tiny $3$};
        \node[] at (7.25, -5) {$\partition{\play{2} \\ \play{1} \\ \play{0} \\ \play{0}}$};
        \draw[->] (7.15, -6.1)--(6.6, -6.6);
        \node[] at (6.6, -6.25) {\tiny $1$};
        \node[] at (6.6, -7.55) {$\partition{\play{1} \\ 0\\0}$};
        \draw[->] (6.6, -8.35)--(6.6, -8.85);
        \node[] at (6.6, -9.15) {$\ldots$};
        \draw[->] (7.25, -6.1)--(7.25, -6.6);
        \node[] at (7.15, -6.35) {\tiny $2$};
        \node[] at (7.25, -7.55) {$\partition{\play{3}\\ \play{0} \\ \play{0}}$};
        \draw[->] (7.25, -8.35)--(7.25, -8.85);
        \node[] at (7.25, -9.15) {$\ldots$};
        \draw[->] (7.35, -6.1)--(8, -6.6);
        \node[] at (7.85, -6.35) {\tiny $3$};
        \node[] at (8, -7.55) {$\partition{\play{3}\\ \play{2}\\ \play{0} \\ \play{0}}$};
        \draw[->] (8, -8.5)--(8, -9);
        \node[] at (8, -9.25) {$\ldots$};
        \draw[->] (7.5, -6.1)--(8.75, -6.6);
        \node[] at (8.75, -6.35) {\tiny $4$};
        \node[] at (8.75, -7.75) {$\partition{\play{3}\\ \play{2}\\ \play{1}\\ \play{0} \\ \play{0}}$};
        \draw[->] (8.75, -8.75)--(8.75, -9.25);
        \node[] at (8.75, -9.5) {$\ldots$};
    \end{tikzpicture}
    \caption{The \emph{quasi-infinite game tree} $\mathcal{F}_W$.}
    \label{fig:inf-game-tree}
\end{figure}
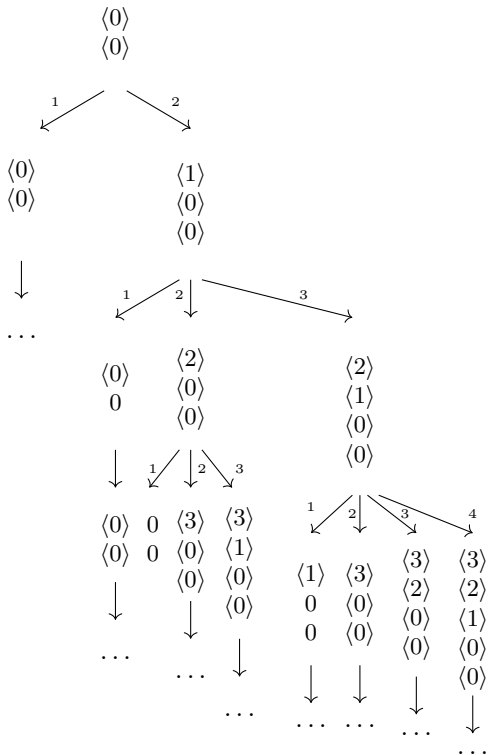
  Recall that $\mathcal{C}_{W^k} = \{ (0)^k \}$. $\mathcal{F}_W$ starts at $\partition{\play{0}\\ \play{0}}$ which represents all bracketed parts in the roots of any trees \rebs{W^k} for $k \ge 2$. The rules for $\xrightarrow{i}$ in the \emph{quasi-infinite game tree} \cite[\S3]{eriksson2017level} are described below:
  
\begin{enumerate}
    \item Delete the bracketed $i$th part.
    \item Increase all parts above it by $1$ and make them bracketed.
    \item Bracket the new $i$th part (if there is one) if it differs at most $1$ from the old one.
    \item If a zero was played, add zeros at the end so that there are two, and make them bracketed.
\end{enumerate}
Eriksson and Jonsson \cite{eriksson2017level} showed
\begin{equation*}
    \mathcal{O}^\opp_{W^\infty}=\lim_{m \rightarrow \infty} \mathcal{O}^\opp_{W^m}.
\end{equation*}

  Generalizing that idea, we will build a \emph{P-quasi-infinite forest} $\mathcal{F}_P$ consists of $t$ trees denoted $G_1, \ldots, G_p$, with $G_t$ rooted at $\gamma^{(t)} \in \mathcal{C}_P$. The remaining vertices of $G_t$ are generated from $\gamma^{(t)}$ by applying the reverse BS operations $R_i$ in all possible ways, and modifying the parts according to rules described in the next section. An example is the $BWWW$-quasi-infinite forest in Figure~\ref{fig:BWWW-inf-graph}. 

\begin{figure}[hbt]
    \centering
    \begin{tikzpicture}
        %first tree
        \node[] at (0, 1.25) {$G_1$};
        \node[] (0) at (0,0) {$\boldsymbol{\partition{\play{0}\\ 0 \\ 0 \\ 1}}$};
        \node[] (1) at (0, -2) {$\boldsymbol{\partition{\play{0} \\ \play{0} \\ \play{1} \\ 0}}$};
        \draw[->] (0)--(1) node[pos=0.5, left=1pt] {\tiny $1$};
        \node[] (2) at (0, -3.75) {$\ldots$};
        \draw[->] (1)--(2);
        
        %second tree
        \node[] at (3, 1.25) {$G_2$};
        \node[] (3) at (3, 0.25) {$\boldsymbol{\partition{\play{0} \\ \play{0} \\ \play{1} \\ 0}}$};
        %branch 2.1
        \node[] (4) at (2.25, -2) {$\boldsymbol{\partition{\play{0} \\ \play{1} \\ 0\\0}}$};
        \draw[->] (3.south west)--(4.north) node[pos=0.5, left=1pt] at (5.2, -0.85) {\tiny $1$};
        \node[] (5) at (2.25, -3.75) {$\ldots$};
        \draw[->] (4)--(5);
        %branch 2.2
        \node[] (6) at (3.75, -2.25) {$\partition{\play{1}\\ \bm{\play{1}} \\ \bm{\play{0}} \\ \bm{0}\\ \bm{0}}$};
        \draw[->] (3.south east)--(6.north) node[pos=0.5, right=1pt] {\tiny $2 \slash 3$};
        %branch 2.2.1
        \node[] (7) at (3, -4.8) {$\partition{\play{1} \\ 0\\0 \\ 0}$};
        \draw[->] (6.south west)--(7.north) node[pos=0.5, left=1pt] {\tiny $1$};
        \node[] (8) at (3, -7) {$\boldsymbol{\partition{\play{0} \\ 0\\0 \\ 1}}$};
        \draw[->] (7)--(8) node[pos=0.5, left=1pt] {\tiny $1$};
        %branch 2.2.2
        \node[] (9) at (3.75, -5) {$\partition{\play{2} \\ \bm{\play{0}} \\ \bm{0} \\ \bm{0} \\ \bm{1}}$};
        \draw[->] (6)--(9.north) node[pos=0.5, right=2] at (8, -3) {\tiny $2$};
        \node[] at (3.75, -6.5) {$\ldots$};
        %branch 2.2.3 
        \node[] (10) at (4.5, -5) {$\partition{\play{2}\\ \play{2} \\ \bm{\play{0}} \\ \bm{\play{0}}\\ \bm{\play{1}} \\ \bm{0}}$};
        \draw[->] (6.south east)--(10.north) node[pos=0.5, right=2] at (8, -3) {\tiny $3$};
        \node[] at (4.5, -6.5) {$\ldots$};
        
        %third tree
        \node[] at (6, 1.25) {$G_3$};
        \node[] (11) at (6,0.35) {$\boldsymbol{\partition{\play{0} \\ \play{1}\\0\\0}}$};
        \node[] (12) at (6, -2.25) {$\boldsymbol{\partition{\play{1} \\ \play{0} \\ 0 \\ 0}}$};
        \draw[->] (11)--(12) node[pos=0.5, left=1pt] {\tiny $1 \slash 2$};
        \node[] (13) at (6, -3.75) {$\ldots$};
        \draw[->] (12)--(13);

        %forth tree
        \node[] at (9, 1.25) {$G_4$};
        \node[] (14) at (9, 0.25) {$\boldsymbol{\partition{\play{1} \\ \play{0} \\ 0 \\ 0}}$};
        %branch 3.1
        \node[] (15) at (8.25, -2.25) {$\boldsymbol{\partition{\play{0} \\ 0\\0\\1}}$};
        \draw[->] (14.south west)--(15.north) node[pos=0.5, left=1pt] {\tiny $1$};
        \node[] (16) at (8.25, -3.75) {$\ldots$};
        \draw[->] (15)--(16);
        %branch 3.2
        \node[] (17) at (9.75, -2.25) {$\partition{\play{2} \\ \bm{\play{0}} \\ \bm{\play{0}}\\ \bm{\play{1}}\\ \bm{0}}$};
        \draw[->] (14.south east)--(17.north) node[pos=0.5, right=1pt] {\tiny $2$};
        %branch 3.2.1
        \node[] (18) at (9, -5) {$\partition{0\\0 \\ 1\\0}$};
        \draw[->] (17.south west)--(18.north) node[pos=0.5, left=1pt] {\tiny $1$};
        %branch 3.2.2
        \node[] (19) at (9.75, -5) {$\partition{\play{3} \\ \bm{\play{0}} \\ \bm{\play{1}} \\ \bm{0} \\ \bm{0}}$};
        \draw[->] (17)--(19) node[pos=0.5, left=1pt]{\tiny $2$};
        \node[] at (9.75, -6.5) {$\ldots$};
        %branch 3.2.3
        \node[] (20) at (10.5, -5.25) {$\partition{\play{3} \\ \play{1} \\ \bm{\play{1}} \\ \bm{\play{0}} \\ \bm{0}\\ \bm{0}}$};
        \draw[->] (17.south east)--(20.north) node[pos=0.5, right=1pt]{\tiny $3$};
        \node[] at (10.5, -6.5) {$\ldots$};
        
    \end{tikzpicture}
    \caption{BWWW quasi-infinite forest. The bold entries are $\gamma^{(t)}$ for some $t$.}
    \label{fig:BWWW-inf-graph}
\end{figure}
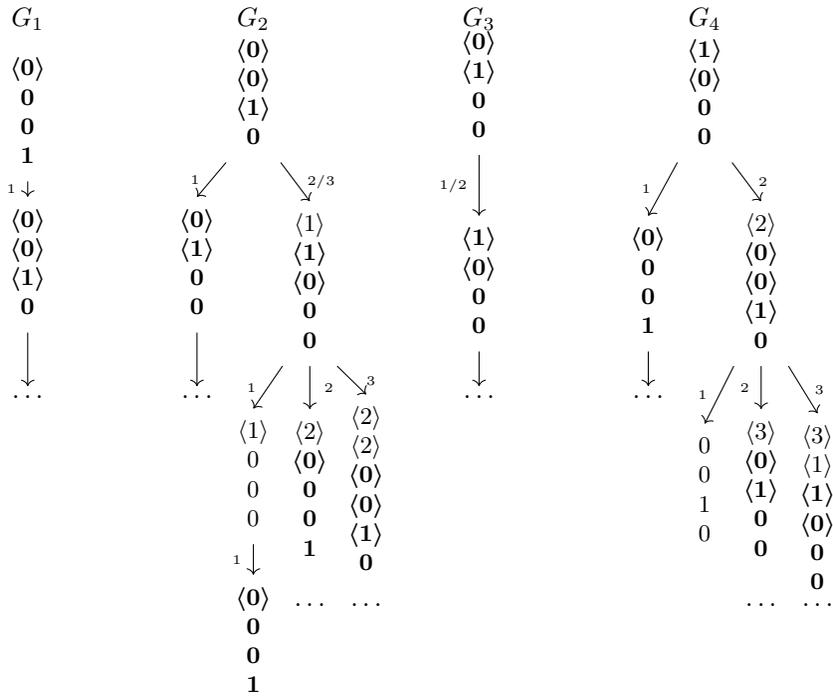

  Let $d(\lambda, \gamma^{(t)})$ be the length of the reversed playing sequence from $\gamma^{(t)}$ to $\lambda$. Let $g_t(x)$ be the generating functions by level sizes (the growth function) of $G_t$, that is, 
$$
g_t(x):=\sum_{\lambda \in G_t} x^{d(\lambda,\gamma^{(t)})}.
$$

  We will show later that 
\begin{center}
\rebs{P^\infty} $= \lim_{k \rightarrow \infty}$ \rebs{P^k} 
\end{center}
is the quasi-infinite forest $\mathcal{F}_P$ after pruning the branches formed by playing sequences $[1 \ldots]$ and adding arrows $\gamma^{(t)} \xrightarrow{1} \gamma^{(t+1)}$ to make the roots a recurrent cycle. In fact, up to level $k$, the finite digraph \rebs{P^k} coincides with the limit digraph \rebs{P^\infty}.

%The general rules and examples
\section{The rules for producing the forest \texorpdfstring{$\mathcal{F}_P$}{forest}}\label{sec:rule}

The idea for the \emph{P-quasi-infinite forest} is that each root $\gamma^{(t)}$ is actually an infinite periodic binary vector whose consecutive segments $(\gamma^{(t)}_{mp+1}, \ldots, \gamma^{(t)}_{(m+1)p})$ of length $p$ each forms a copy of $\gamma^{(t)}$. The reason for this beginning will be explained in the next section. This infinite vector guarantees that we will not encounter negative parts in difference labellings when performing the $R_j$ operations as in the finite cases. However, we cannot write out infinite-length vectors for the elements in the forest. Instead, we start with only one block and add more blocks as one goes further down the tree $G_t$ and applies the operators $R_j$. In fact, each segment $(\gamma^{(t)}_{mp+s}, \ldots, \gamma^{(t)}_{(m+1)p+s-1})$ is a copy of the root $\gamma^{(t+s)}$ for any nonegative integers $m, s$.  

In this section, for a vector $\lambda = (\lambda_1, \ldots, \lambda_m)$ and $j \le k$, we write $\lambda[j:k] = (\lambda_j, \ldots, \lambda_k)$. Also denote $\lambda_j = \lambda[1:j]$ and $\lambda[j:] = (\lambda_j, \ldots, \lambda_m)$.

As part of the rules, we will want to maintain the property that every $\lambda$ in the forest $\mathcal{F}_P$ has a \emph{tail} (= final segment of length $p$) exactly matching one of the roots $\gamma^{(t)}$, although the subscript $t$ may not match the subscript $t'$ of the tree $G_{t'}$ which contains $\lambda$.  Assume in the rules below that $\lambda$'s tail matches the root $\gamma^{(t)}$.
\vskip .1in
\noindent
\textit{Rules for $\lambda \xrightarrow{i} R_i(\lambda)$ in the tree $G_t$ in general quasi-infinite forest - that is, $\lambda_i$ is bracketed and $R_i$ is played:}
\begin{enumerate}
    \item If $\lambda_i$ is in the tail (among the lowest $p$ parts), append to $\lambda$ so that $\lambda[i:] = \gamma^{(s)}$ for some (unique) $s$. Replace $\lambda[i:]$ by $\gamma^{(s+1)}$ and bracket the tail as in the root $\gamma^{(s+1)}$. 
    %Bracket the added parts as in $\gamma^{(s+1)}$.
    \item Otherwise, delete $\lambda_i$. If $\partition{\lambda_i \\ \lambda_{i+1}} = \partition{s \\ s+1}$, playing either $R_i$ or $R_{i+1}$ has the same result, so we label the play by $\xrightarrow{i \slash i+1}$ and apply $R_{i+1}$. Bracket the new $i^{th}$ part (if there is one) if it differs at most $1$ from $\lambda_i$. If there are two consecutive entries $\partition{s\\s+1}$ and $s$ is bracketed, so is $s+1$.
    \item Increase all entries $\lambda[:(i-1)]$ by $1$ each, and bracket them.
\end{enumerate}

Here is an example of these rules in the $BWWW$ quasi-infinite forest from Figure~\ref{fig:BWWW-inf-graph}.
\begin{ex}\label{ex:step-gen-rule}
    Let $\lambda = \partition{\play{2}\\ \bm{\play{0}}\\ \bm{\play{0}}\\ \bm{\play{1}}\\ \bm{0}}$. The bold parts are $\gamma^{(2)}$.
    \begin{itemize}
        \item Play $\lambda \xrightarrow{3} R_3(\lambda)$:
        \begin{enumerate}
            \item[Step 1:] Note that $\lambda_3$ is among the last $4$ parts, we append to $\;
                \lambda \longrightarrow \partition{\play{2}\\ \play{0}\\ \bm{\play{0}}\\ \bm{\play{1}}\\ \bm{0} \\ \bm{0}}\,,\,$
            so that $\lambda[3:] = \gamma^{(3)}$. 
            \item[Step 2:] Replace the tail $\gamma^{(3)}$ by $\gamma^{(4)}$ and bracket as in $\gamma^{(4)}$: $\;
                \partition{\play{2}\\ \play{0}\\ \bm{\play{0}}\\  \bm{\play{1}}\\ \bm{0} \\ \bm{0}} \longrightarrow \partition{\play{2}\\ \play{0}\\ \bm{\play{1}}\\ \bm{\play{0}}\\ \bm{0} \\ \bm{0}}.$
            \item[Step 3:] Add $1$ to each parts in $\lambda[:2]$ to get 
            $ R_3(\lambda) = \partition{\play{3}\\ \play{1}\\ \bm{\play{1}}\\ \bm{\play{0}}\\ \bm{0} \\ \bm{0}}.$
        \end{enumerate}
        \item Play $\lambda \xrightarrow{1} R_1(\lambda)$. Note that $\lambda_1$ is not among the last $4$ parts, so rule $2$ applies to obtain 
        $ R_1(\lambda) = \partition{0 \\ 0 \\ 1 \\ 0}\;,$
        because the new first part, $0$, differs from $\lambda_1$ by $2$.
    \end{itemize}
\end{ex}

%here for BW

\section{Limiting level sizes of \texorpdfstring{\rebs{(BW)^k}}{orb}  as \texorpdfstring{$k$}{k} grows}\label{sec:BW-limit}

\subsection{The quasi-infinite forest \texorpdfstring{$\mathcal{F}_{BW}$}{forest}}
  We first analyze $P=BW$. The exception in this case is that the root $\partition{\play{1}\\ \play{0}} \in \mathcal{C}_{BW}$ we defined in Section~\ref{sec:1} does not reflect all the playable parts in $\mathcal{C}_{(BW)^k}^+$ for $k \ge 2$ (e.g $\partition{\play{3}\\ \play{2}\\ \play{1}\\ 0}$). However, $\partition{\play{1}\\ \play{0}\\ \play{1}}$ does, by the same argument as in Proposition~\ref{prop:roots} in Section~\ref{sec:3}. Thus, we build the forest $\mathcal{F}_{BW}$ as in Figure~\ref{fig:BW-quasi-tree} following the rules given in the Section~\ref{sec:rule}, except that the roots are $\gamma^{(1)} = \partition{\play{0} \\ \play{1}}$ and $\gamma^{(2)} = \partition{\play{1} \\ \play{0} \\ \play{1}}$.

\begin{figure}[H]
    \centering
    \begin{tikzpicture}
        %first tree
        \node[] at (0, 1) {$\bm{G_1}$};
        \node[] at (0,0) {$\partition{\play{0} \\ \play{1}}$};
        \draw[->] (0, -0.75)--(0, -1.25);
        \node[] at (-0.35, -1) {\tiny $1 \slash 2$};
        \node[] at (0, -2) {$\partition{\play{1} \\ \play{0} \\ \play{1}}$};
        \draw[->] (-0.15, -2.75)--(-0.5, -3.25);
        \draw[->] (0.15, -2.75)--(0.5, -3.25);
        \node[] at (-0.5, -3.75) {$\ldots$};
        \node[] at (0.5, -3.75) {$\ldots$};
        
        %second tree
        \node[] at (4, 1) {$\bm{G_2}$};
        \node[] at (4, 0) {$\partition{\play{1} \\ \play{0} \\ \play{1}}$};
        %branch 2.1
        \draw[->] (3.85, -0.75)--(3, -1.25);
        \node[] at (3.2, -0.9) {\tiny $1$};
        \node[] at (2.75, -2) {$\partition{\play{0} \\ \play{1}}$};
        \draw[->] (2.75, -2.55)--(2.75, -3.05);
        \node[] at (2.75, -3.5) {$\ldots$};
        %branch 2.2
        \draw[->] (4.15, -0.75)--(5, -1.25);
        \node[] at (4.8, -0.9) {\tiny $2 \slash 3$};
        \node[] at (5, -2.25) {$\partition{\play{2} \\ \play{1} \\ \play{0} \\ \play{1}}$};
        %branch 2.2.1
        \draw[->] (4.85, -3.25)--(4, -3.75);
        \node[] at (4.15, -3.45) {\tiny $1$};
        \node[] at (4, -4.7) {$\partition{\play{1} \\ 0 \\ 1}$};
        \draw[->] (4, -5.5)--(4,-6);
        \node[] at (3.85, -5.65) {\tiny $1$};
        \node[] at (4, -6.7) {$\partition{\play{0} \\ \play{1}}$};
        \draw[->] (4, -7.25)--(4, -7.75);
        \node[] at (4, -8.25) {$\ldots$};
        %branch 2.2.2
        \draw[->] (5, -3.25)--(5, -3.75);
        \node[] at (4.85, -3.45) {\tiny $2$};
        \node[] at (5, -4.7) {$\partition{\play{3} \\ \play{0} \\ \play{1}}$};
        \draw[->] (4.85, -5.5)--(4.45, -6);
        \node[] at (4.5, -5.75) {\tiny $1$};
        \node[] at (4.5, -6.7) {$\partition{0 \\ 1}$};
        \draw[->] (5.15, -5.5)--(5.55, -6);
        \node[] at (5.65, -5.75) {\tiny $2 \slash 3$};
        \node[] at (5.65, -7) {$\partition{\play{4} \\ \play{1} \\ \play{0}\\ \play{1}}$};
        \draw[->] (5.65, -8)--(5.65, -8.5);
        \node[] at (5.65, -9) {$\ldots$};
        %branch 2.2.3
        \draw[->] (5.15, -3.25)--(7.15, -3.75);
        \node[] at (6.5, -3.45) {\tiny $3 \slash 4$};
        \node[] at (7.25, -5) {$\partition{\play{3} \\ \play{2} \\ \play{1} \\ \play{0}\\ \play{1}}$};
        \draw[->] (7.15, -6.1)--(6.6, -6.6);
        \node[] at (6.6, -6.25) {\tiny $1$};
        \node[] at (6.6, -7.55) {$\partition{\play{2} \\ 1\\0\\1}$};
        \draw[->] (6.6, -8.35)--(6.6, -8.85);
        \node[] at (6.6, -9.15) {$\ldots$};
        \draw[->] (7.25, -6.1)--(7.25, -6.6);
        \node[] at (7.15, -6.35) {\tiny $2$};
        \node[] at (7.25, -7.55) {$\partition{\play{4}\\ \play{1} \\0\\1}$};
        \draw[->] (7.25, -8.35)--(7.25, -8.85);
        \node[] at (7.25, -9.15) {$\ldots$};
        \draw[->] (7.35, -6.1)--(8, -6.6);
        \node[] at (7.85, -6.35) {\tiny $3$};
        \node[] at (8, -7.55) {$\partition{\play{4}\\ \play{3}\\ \play{0} \\ \play{1}}$};
        \draw[->] (8, -8.5)--(8, -9);
        \node[] at (8, -9.25) {$\ldots$};
        \draw[->] (7.5, -6.1)--(8.75, -6.6);
        \node[] at (8.75, -6.35) {\tiny $4 \slash 5$};
        \node[] at (8.75, -8) {$\partition{\play{4}\\ \play{3}\\ \play{2}\\ \play{1} \\ \play{0}\\ \play{1}}$};
        \draw[->] (8.75, -9.25)--(8.75, -9.75);
        \node[] at (8.75, -10) {$\ldots$};
    \end{tikzpicture}
    \caption{$BW$-quasi-infinite  $\mathcal{F}_{BW}$.}
    \label{fig:BW-quasi-tree}
\end{figure}
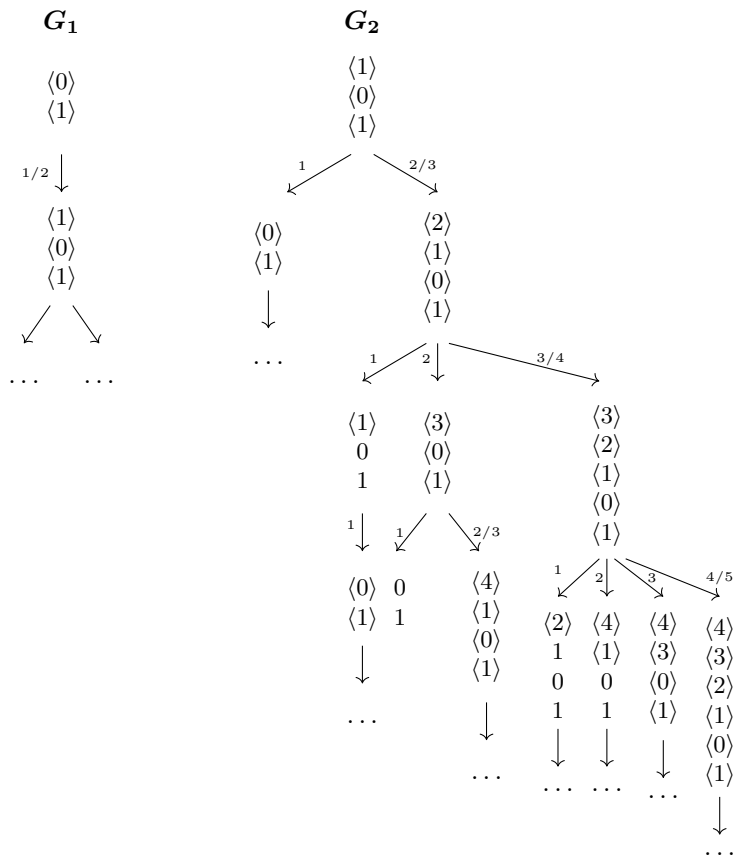

\begin{comment}
\noindent \textit{Rules for $\xrightarrow{i}$ in the BW quasi-infinite forest:}
\begin{enumerate}
    \item Delete the bracketed entry in the $i^{th}$ row. If the $i^{th}$ bracket is $\play{0}$ followed immediately by a $\play{1}$, playing either vertex $i$ or vertex $i+1$ results the same, so we label the play by $\xrightarrow{i \slash i+1}$.
    \item Increase all entries in rows above it by $1$ each, and bracket them.
    \item Bracket the new $i^{th}$ entry (if there is one) if it differs at most $1$ from the old one. If there are two consecutive entries $\partition{0\\1}$ and $0$ is bracketed, so is $1$.
    \item If the $\play{1}$ or $\partition{\play{0}\\ \play{1}}$ at the lowest position is played, add $\partition{\play{0}\\ \play{1}}$ at the end. 
\end{enumerate}
\end{comment}

\subsection{Proof of Theorem~\ref{thm:BW-limit-level}}
  Let $g(x)$ be the generating function for the level sizes of $\mathcal{F}_{BW}$, then $g(x) = g_1(x) + g_2(x).$ Starting from $\gamma^{(1)}$, we play $R_{1/2}$ (the only playable part) and get back to the root $\gamma^{(2)}$. Thus the level generating functions
of the two trees satisfy
\begin{equation}
\label{relating-the-two-trees}
    g_1(x) = 1 + xg_2(x).
\end{equation}

  As a reminder, if we have $\partition{\play{0}\\ \play{1}}$ in a difference $\lambda$, adding a staircase makes their values the same. Thus playing either of them results in the same element. To represent this move in a playing sequence, we will use the vertex with $\play{0}$ on it. Moreover, we use the notation $R_\sigma$ for playing sequences in the quasi-infinite game graph similarly to the normal BS game graph. e.g. $R_{[222]}\left( \partition{\play{1}\\ \play{0}\\ \play{1}} \right) = \partition{\play{4}\\ \play{1}\\ \play{0}\\ \play{1}}$.

  Similarly to Jonsson and Eriksson's paper \cite[Proposition~3.1, p.~4]{eriksson2017level}, we have the following proposition:

\begin{prop}\label{prop:BW-seq}
    The tree $G_2$ rooted at $\gamma^{(2)} = \partition{\play{1}\\ \play{0}\\ \play{1}}$ has the following properties:
    \begin{enumerate}
        \item[i.] Once $R_1$ is played, only $R_1$ can be played (until $\gamma^{(1)} = \partition{\play{0}\\ \play{1}}$ or a leaf is reached). 
        \item[ii.] For $r \ge 2$, the playing sequence $[234\ldots r 1^r]$ leads to  $\gamma^{(1)}$.
    \end{enumerate}
\end{prop}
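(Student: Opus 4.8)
The plan is to track carefully what the rules of Section~\ref{sec:rule} do to the bracketing pattern of a vector in $G_2$, since both claims are really statements about which parts are bracketed (hence playable) after a sequence of moves. I would set up the following invariant: in $G_2$, at any vertex reached by a playing sequence that has not yet played $R_1$, the vector has the form $(\langle a_1 \rangle, \ldots, \langle a_j \rangle, \langle 0 \rangle, \langle 1 \rangle)$ where $a_1 > a_2 > \cdots > a_j \geq 2$ is a strictly decreasing run of bracketed entries sitting on top of the tail $\gamma^{(t)}$ (here $\gamma^{(2)} = (\langle 1\rangle, \langle 0 \rangle, \langle 1 \rangle)$, but after tail-appending moves the tail may be a longer root). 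The base case $\gamma^{(2)}$ fits this shape with $j=1$, $a_1 = 1$ — essentially the degenerate case where the ``run above'' has already merged into the tail.

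For part (i), I would argue: once $R_1$ has been played at such a vertex, rule~2 (or rule~3, when $i=1$ and $\lambda_1$ is above the tail) deletes the top entry, increases nothing (there is nothing above position $1$), and the question is whether the new first part gets bracketed. By the invariant the old $\lambda_1 = a_1$ and the new first part is $a_2$ (or, once the decreasing run is exhausted, it is the top of the tail); since $a_1 > a_2$, if they differ by more than $1$ the new first part is unbracketed and then \emph{no} $R_i$ with $i \geq 2$ can fire because — and this is the point I would need to nail down — all entries strictly below position $1$ are either already examined or protected. More precisely, after an $R_1$ move the configuration becomes $(a_2, \langle a_3\rangle, \ldots)$ with $a_2$ unbracketed; but the entries $\langle a_3 \rangle, \ldots$ are still a valid legal position only if their predecessor is bracketed, and here it is not, so the only legal move is to play position $1$ again once it becomes legal — I would check that playing $R_1$ repeatedly just keeps deleting the top of the decreasing run until the tail is reached, at which point rule~1 converts the tail and one lands at $\gamma^{(1)}$ (or a leaf, if legality fails earlier because $a_2 - a_3 \geq 2$ left an unbracketed — hence unplayable — top). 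The cleanest phrasing: define a vertex to be ``$R_1$-committed'' and show the set of $R_1$-committed vertices is closed under every legal move and that the only legal move out of one is $R_1$.

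For part (ii), I would compute directly. Starting from $\gamma^{(2)} = (\langle 1\rangle, \langle 0\rangle, \langle 1\rangle)$, playing $R_2$ (the $2/3$ move) appends a block to make the tail $\gamma^{(3)}$, rotates it, and raises position $1$: one gets $(\langle 2\rangle, \langle 1\rangle, \langle 0\rangle, \langle 1\rangle)$ — one can read this off Figure~\ref{fig:BW-quasi-tree}. Inductively, I claim $R_{[23\cdots r]}(\gamma^{(2)}) = (\langle r\rangle, \langle r-1\rangle, \ldots, \langle 2\rangle, \langle 1\rangle, \langle 0\rangle, \langle 1\rangle)$, a full decreasing staircase of length $r$ on top of the tail; each move $R_s$ for $s \leq r$ is the ``$s/s{+}1$'' tail-appending move (rule~1) applied at the position holding $0$, which lengthens the staircase by one and raises everything above by $1$, preserving the shape. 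Then playing $R_1$ exactly $r$ times: by part (i) (or rather its proof, since consecutive staircase entries differ by exactly $1$, the new top is \emph{always} rebracketed) each $R_1$ deletes the current top and shifts, so after $r$ applications the staircase is gone and one is left with the tail only, namely $\gamma^{(1)} = (\langle 0 \rangle, \langle 1\rangle)$ — here I should double check the bookkeeping that the very last $R_1$ acting on a length-one-above-tail configuration triggers rule~1 and delivers $\gamma^{(1)}$ rather than $\gamma^{(2)}$, matching the picture.

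The main obstacle I anticipate is part (i): making rigorous the claim that no move other than $R_1$ can occur once $R_1$ is played. This requires understanding precisely how rule~2's bracketing clause (``bracket the new $i$th part if it differs at most $1$ from $\lambda_i$'', plus the $\binom{s}{s+1}$-propagation clause) interacts with legality of $R_j$ for $j \geq 2$, and in particular ruling out the scenario where deleting the top creates a fresh bracketed entry somewhere below that was not bracketed before. I would handle this by proving the stronger structural invariant above (strictly decreasing bracketed run over a recognized tail) is maintained by \emph{every} move in $G_2$, not just $R_1$ moves — once that invariant is in hand, both (i) and (ii) follow by short case analyses, and the Eriksson–Jonsson argument for $W^k$ becomes a direct template.
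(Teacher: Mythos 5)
Your proposal is correct and follows essentially the same route as the paper: part (ii) by the direct computation $R_{[23\cdots r]}(\gamma^{(2)}) = \left(\play{r}, \play{r-1}, \ldots, \play{1}, \play{0}, \play{1}\right)$ followed by $r$ plays of $R_1$ that peel off the staircase (each new top differing by exactly $1$, hence rebracketed), and part (i) by tracking which parts the rules allow to be bracketed after an $R_1$ move. One remark: the paper's justification of (i) is simpler than the ``predecessor must be bracketed'' mechanism you reach for --- since rule 3 brackets only parts \emph{above} the played position (none, when $i=1$) and rule 2 brackets only the new first part (plus a consecutive $(s,s+1)$ successor, which in $G_2$ can only be the tail $(0,1)$, i.e.\ arrival at $\gamma^{(1)}$), an $R_1$ play leaves at most the first part bracketed, so your ``$R_1$-committed'' closure property follows directly from the rules without the full structural invariant.
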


\begin{proof}
    \begin{itemize}
        \item[i.] When we play $R_1$, if the second part differs from the first part by at least $2$, nothing is bracketed in the next state, thus we reach the leaf. Otherwise, according to the rule $3$, either we reach the root $\partition{\play{0}\\ \play{1}}$ if the second and third parts are $\partition{0\\1}$ or the only bracketed part in the next state is the first part.
        
        \item[ii.] It is easy to see that 
        \begin{equation*}
            R_{[23 \ldots r]} \left( { \partition{\play{1}\\ \play{0}\\ \play{1}}} \right) = { \partition{\play{r}\\ \play{r-1}\\ \vdots\\ \play{1}\\ \play{0} \\ \play{1}}}
        \end{equation*}
        and thus playing $[1^r]$ consecutively deletes the first $r$ rows and reaches {\small $\partition{\play{0}\\ \play{1}}$}. From property $i$, we see that this is the only way to get back to the roots.
    \end{itemize}
\end{proof}

  Now we give the proof of \textbf{Theorem~\ref{thm:BW-limit-level}}:

We begin by using 
Proposition~\ref{prop:BW-seq} to construct the \emph{growth function}, or the generating function of $g_2$ by its level sizes. Table~\ref{tab:BW-quasi} shows various playing sequences and their contributions to the generating function
$g_2(x)$, explained below. We use $[\ge t]$ to denote the set of playing sequence with entries no less than $t$, including the empty sequence.

\begin{table}[H]
    \centering
    \begin{tabular}{c c |c c }
        \textbf{Sequence} & \textbf{Contribution} & \textbf{Sequence} & \textbf{Contribution} \\
        \hline
        $[1 \ldots]$ & $xg_1$ & $[2[ \ge 2]]$ & $xg_2$\\
        $[21^2 \ldots]$ & $x^3g_1$ & $[2[ \ge 2]1]$ & $x^2g_2$  \\
        $[231^3 \ldots]$ & $x^5g_1$ & $[23[ \ge 3]1^2]$ & $x^4g_2$\\
        $[2341^4 \ldots]$ & $x^7g_1$ & $[234[\ge 4]1^3 ]$ & $x^6g_2$\\
    \end{tabular}
    \caption{Growth function for tree $G_2$.}
    \label{tab:BW-quasi}
\end{table}

  By Proposition~\ref{prop:BW-seq}.ii., the sequences $[23\ldots r1^r]$ lead to the root of $G_1$, so each of them contribute the whole $G_1$ tree at level $2r-1$, that is, $x^{2r-1}g_1(x)$. This is also true for playing sequence $[1]$, since $R_1 \left( \partition{\play{1}\\ \play{0}\\ \play{1}} \right) =  \partition{\play{0}\\ \play{1}}$. 

Next, if we play $R_2$, we reach $\partition{\play{2}\\ \play{1}\\ \play{0}\\ \play{1}} = \partition{\play{2}\\ \gamma_2}$. Thus if we leave the top part untouched, which means we only play parts of indices greater or equal than $2$, then we have a subtree that is isomorphic to $G_2$. The isomorphism is defined by excluding the top part. Thus sequences $[2[\ge 2]]$ contribute $xg_2$. Similarly with $[23\ldots r [\ge r]1^{r-1}]$, each contributes $x^{2r-2}g_2$, since 
\begin{equation*}
    R_{[23\ldots r]} \left( \gamma_2 \right) = \partition{\play{r}\\ \play{r-1}\\ \vdots\\ \play{1}\\ \play{0} \\ \play{1}} = \partition{\play{r}\\ \play{r-1}\\ \vdots\\ \gamma_2}.
\end{equation*}

  If we play $[23\ldots r [\ge r]]$, the top $r-1$ rows above {\tiny $\partition{\play{1}\\ \play{0}\\ \play{1}}$} are always playable due to rule $2$. Thus the playing sequences $[23\ldots r [\ge r]1^s]$ with any $s \le r-1$ are legal. For each $r \ge 2$, we only count for $[23\ldots r [\ge r]1^r]$, since if $s < r$, $[23\ldots r [\ge r]1^s]$ are counted as $[23\ldots s [\ge s]1^s]$. Hence, the type $[23\ldots r [\ge r]1^{r-1}]$ contributes $x^{2r-2}g_2$. 

  Therefore, we obtain
\begin{align*}
    g_2(x) &= 1 + (x + x^3 + \ldots) g_1(x) + (x + x^2 + x^4 + \ldots) g_2(x) \\
    &= 1 + (x + x^3 + \ldots) + (x + 2x^2 + 2x^4 + \ldots)g_2(x) \qquad\text{(substituting for }g_1(x)\text{ using \eqref{relating-the-two-trees}})\\
    &= \left( 1 + \dfrac{x}{1-x^2} \right) + x g_2(x) + \dfrac{2x^2}{1 - x^2} g_2(x) 
\end{align*}
Bring all occurrences of $g_2(x)$ to the left side gives
$$ \dfrac{x^3 - 3x^2 - x + 1}{1-x^2} g_2(x) = \dfrac{-x^2+x+1}{1-x^2}$$
and therefore
$$g_2(x) = \dfrac{-x^2+x+1}{x^3 - 3x^2 - x + 1}.$$

  From this one concludes, again using \eqref{relating-the-two-trees}, that
$$ 
\begin{aligned}
g(x) &= g_1(x) + g_2(x)
= (1+x g_2(x)) + g_2(x)
= 1 + (1 + x) g_2(x)\\
&= \dfrac{-3x^2+x+2}{x^3-3x^2-x+1}
= \dfrac{(1-x)(3x+2)}{x^3-3x^2-x+1}. 
\end{aligned}
$$

  However, we desire the generating function for the level sizes of \rebs{(BW)^k} in the limit as $k \rightarrow \infty$. As constructed, our quasi-infinite forest has an entire copy of itself after playing $R_1$, giving rise to the left branch $[1\ldots]$, which we wish to disregard. Letting $H_{BW}(x)$ denote the height generating function for the rest
of the quasi-infinite forest, that is, the two roots and the elements in the branch $[2\ldots]$, one then has
$$ g(x) = xg(x) + H_{BW}(x)$$
and therefore
$$ H_{BW}(x) = (1-x)g(x)=\dfrac{(1-x)^2(3x+2)}{x^3 - 3x^2 - x + 1}.
$$
Thus to complete the proof of 
\textbf{Theorem~\ref{thm:BW-limit-level}}, 
it only remains to show that the level sizes of \rebs{(BW)^k} actually converge to the coefficients given by $H_{BW}(x)$.  
This is a consequence of a somewhat more precise theorem.

\begin{theorem}\label{thm:fin-coincide}
    The finite reverse Bulgarian solitaire graph \rebs{(BW)^{k+1}} coincides at least up to level $k$ with the BW quasi-infinite forest after removing its branch $[1\ldots]$.
\end{theorem}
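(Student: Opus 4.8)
The plan is to compare the finite reversed graph \rebs{(BW)^{k+1}} with the quasi-infinite forest $\mathcal{F}_{BW}$ level by level, and to show that the difference labellings produced by the rules of Section~\ref{sec:rule} never ``notice'' the finiteness of the partition until one has descended past level $k$. The key point is the bookkeeping promised in Section~\ref{sec:rule}: every vertex $\lambda$ in $\mathcal{F}_{BW}$ secretly stands for an infinite periodic binary vector whose tail is a copy of some root $\gamma^{(s)}$, and the rules only ever read and modify finitely many coordinates near the top. So I first want to make precise the correspondence $\Phi$ between a finite difference labelling in \rebs{(BW)^{k+1}} and its ``stabilized'' infinite version, and check that $\Phi$ intertwines the finite reverse-BS operators $R_i$ (on difference labellings of partitions of $n=(2k+1)k/ \ldots$, i.e.\ $n$ such that $\Delta$ has the right length) with the forest operators described in Section~\ref{sec:rule}, \emph{provided} the finite vector still has enough trailing staircase room.

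Second, I would quantify ``enough room.'' Starting from a root $(\gamma^{(t)})^{k+1}$, each application of $R_i$ in the forest either (rule~1) consumes one block of the periodic tail and appends a fresh block, or (rules~2--3) acts entirely above the tail. Tracking the length, after $\ell$ steps down a branch the modified prefix has length at most something linear in $\ell$, while the untouched periodic tail still has length roughly $(k+1-\ell)p$ with $p=2$. As long as at least one full copy of a root survives as a genuine tail of the finite partition — equivalently as long as the partition's parts have not been forced to violate $\lambda_1 \ge \lambda_2 \ge \cdots > 0$ or the staircase bound — the finite move and the forest move agree. The claim is that this survives through level $k$. I would prove this by induction on the level, using Proposition~\ref{prop:BW-seq}: on the pruned forest (branch $[1\ldots]$ removed) every branch of length $\ell$ is, by part~(ii), of the form $[23\cdots r\,[\ge r]\,1^{\le r-1}]$, and such a sequence only ever touches the top $\approx \ell$ coordinates, leaving the lower $(k+1)p - O(\ell)$ coordinates equal to the periodic continuation of $\gamma^{(t)}$; for $\ell \le k$ this is $\ge 2$ coordinates, i.e.\ at least one intact block, which is exactly what rule~1 needs in order to fire correctly.

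Third, I would handle the two places where finiteness could bite: (a) the very bottom of the finite tree, where playing the lowest bracketed part should, in the infinite picture, ``append a new block'' — in the finite graph this instead produces a shorter partition, but one checks this only happens after the first $1$ in the branch word, i.e.\ below level $k$ on the relevant part of the pruned forest; and (b) the identification, at the roots, of \rebs{(BW)^{k+1}}'s recurrent cycle $\mathcal{C}_{(BW)^{k+1}}^+$ with the forest roots $\gamma^{(1)},\gamma^{(2)}$ together with the added arrows $\gamma^{(t)} \xrightarrow{1} \gamma^{(t+1)}$. Part (b) is essentially the remark preceding Figure~\ref{fig:BW-quasi-tree} (that $\partition{\play{1}\\ \play{0}\\ \play{1}}$, not $\partition{\play{1}\\ \play{0}}$, records all playable parts of $\mathcal{C}_{(BW)^k}^+$ for $k \ge 2$), plus the cited Proposition~\ref{prop:roots}, so I would just invoke it. Conclude that the level-$\ell$ vertex sets of \rebs{(BW)^{k+1}} and of the pruned forest are in bijection for all $\ell \le k$, hence $\mathcal{D}_{(BW)^{k+1}}(x)$ and $H_{BW}(x)$ agree through degree $k$; letting $k \to \infty$ gives coefficient-wise convergence and completes Theorem~\ref{thm:BW-limit-level}.

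The main obstacle I expect is step three's case (a): carefully verifying that the ``boundary'' behaviour — what happens when a reverse move in the finite graph would reach into coordinates that the infinite model fills with the periodic pattern but the finite partition simply does not have — is always pushed strictly below level $k$. This needs the precise linear bound from Proposition~\ref{prop:BW-seq}(ii) on how deep into the vector a length-$\ell$ pruned branch can reach, together with the observation that $\gamma^{(t)}$ has a trailing $0$, so the staircase lower-bound constraint $\lambda_j \ge \ell(\lambda)-1$ is exactly the constraint encoded by the $\{0,1\}$-pattern; getting the off-by-one in ``$\lfloor (k+1)/2\rfloor$ vs.\ $k$'' right will require care but is not conceptually deep.
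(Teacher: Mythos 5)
Your proposal is correct and follows essentially the same strategy as the paper's (much terser) proof: the paper likewise argues that reaching level $k$ forces at most $k$ tail-block replacements, so starting from the finite root with $k$ spare blocks of $\partition{0\\1}$ one never runs out of periodic tail before level $k$, whence the finite graph and the pruned forest coincide there. Your added scaffolding (the intertwining map $\Phi$ and the branch-shape analysis via Proposition~\ref{prop:BW-seq}) is a legitimate elaboration of the same counting argument rather than a different route.
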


\begin{proof}
    For each $k$, to get to level $k$ in the quasi-infinite forest, there are at most $k$ times that we change the tail $\partition{\play{0}\\ \play{1}}$ to $\partition{\play{1}\\ \play{0}\\ \play{1}}$, that means we add at most $k$ blocks $\partition{\play{0}\\ \play{1}}$. Thus if we start with the root $\partition{\play{1}\\ \play{0}\\ \play{1} \\ {\quad\,\left(\partition{0 \\ 1} \right)^{k-1}}\\ 0} \;$, we don't need to add new parts until using up those $k$ blocks of $\partition{0\\1}$. Moreover, it is easy to confirm that the rule of bracketing makes sense if $k$ is large enough. This implies the coincidence of the quasi-infinite forest with the finite graph.
\end{proof}

  Next, some kind of similar arguments apply to primitive necklaces of length at least $3$.

%properties to explain the rules of the forest
\section{Explanations for the rules of the general forest for primitive necklaces \texorpdfstring{$\abs{P} \ge 3$}{l}}\label{sec:3}

We wish to explain why the rules in Section~\ref{sec:rule} for constructing the (quasi-infinite) forest $\mathcal{F}_P$ from the recurrent set $\mathcal{C}_P$ actually produce a forest which is  
the limiting digraph
$
\lim_{k \rightarrow \infty} \mathcal{O}^\opp_{P^k}
$
(after pruning the branches whose playing sequences are $[1\ldots]$ and adding arrows $\gamma^{(t)} \xrightarrow{1} \gamma^{(t+1)}$ to form the recurrent cycle $\mathcal{C}_P$). To this end, we answer some below questions about the construction. In this section, if no further information is given, the statements relates to the forest. Also, the $\lambda = \partition{\mu \\ \nu}$ is used to mean the concatenation of either two partitions or two difference labelings $\mu$ and $\nu$. For a partition or a difference labeling $\lambda$ and an integer $h$, we write $\lambda + h = (\lambda_1 + h, \ldots, \lambda_m + h)$. Obviously, if $\lambda$ is a difference labeling then $(\lambda + h)^+ = \lambda^+ + h$.

%why start with the recurrent elements and keep adding blocks
\begin{question}
    How do we bracket the roots $\gamma^{(t)}$ of the tree $G_t$, even though they are infinite vectors?
\end{question}

\begin{answer} \rm We bracket them as in the recurrent set of the orbit $\mathcal{O}_P$. The following proposition clarifies that bracketing rule:

\begin{prop}\label{prop:roots}
    For any primitive necklace $P$ of length at least $3$ and any $k$, elements in the recurrent set $\mathcal{C}_{P^k}$ restricted to the first $p$ parts are identical (with brackets). Moreover, only the first $3$ parts can be bracketed.
\end{prop}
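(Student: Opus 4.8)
We need to show two things about $\mathcal{C}_{P^k}$ for a primitive necklace $P$ of length $p \geq 3$: first, that all elements of $\mathcal{C}_{P^k}$ agree on their first $p$ parts (with the bracketing), and second, that only the first three parts can carry brackets.

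Let me think about this. An element of $\mathcal{C}_{P^k}$ is a partition $\lambda$ whose difference labelling $\lambda^- = \lambda - \Delta_{m-1}$ (with $m = pk$) is a $\{0,1\}$-vector of length $pk$ that encodes the necklace $P^k$. Since $P^k$ is periodic with period $p$, the vector $\lambda^-$ is $(\gamma)^k$ for some $\{0,1\}$-vector $\gamma$ of length $p$ encoding $P$ (up to rotation). So $\lambda^-$ is determined by the rotation class. Now the bracketing: a part $\lambda_j$ is bracketed iff it is a legal reverse-BS move, i.e. $\lambda_j \geq l(\lambda) - 1 = m - 1$. Equivalently, $(\lambda^-)_j = \lambda_j - (m - j) \geq j - 1$.

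So we need: $\gamma_i + (\text{number of 1's among } \gamma_1, \ldots, \gamma_{i-1} \text{ in the full periodic vector, shifted appropriately})$... let me be careful. Write $\lambda^- = (\gamma)^k$ and recall $\lambda_j = (\lambda^-)_j + (m-j)$. Hmm, rather: the first $p$ entries of $\lambda^-$ are just $\gamma$ itself. The condition for $\lambda_j$ bracketed is $(\lambda^-)_j \geq j - 1$. For $j \leq p$, $(\lambda^-)_j = \gamma_j \in \{0, 1\}$, so this forces $j - 1 \leq 1$, i.e. $j \leq 2$... but the proposition says first three. Let me recheck: $l(\lambda)$ is the number of parts, which is $m = pk$ only if $\gamma_p = $ ... wait, the last part of $\lambda$ could be zero in the difference labelling but a partition has positive parts. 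I need to recall the bracketing rules from Section~\ref{sec:rule} more carefully, where $\partition{s \\ s+1}$ patterns propagate bracketing. So a part equal to $j-2$ in the difference labelling can still be bracketed if it's the "$s$" in an $\partition{s \\ s+1}$ pattern where the $s+1$ is bracketed. That's the mechanism giving the third part. So the plan is:

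First I would set up the correspondence: an element of $\mathcal{C}_{P^k}$ is $\lambda = (\gamma^k)^+$ where $\gamma$ is a fixed rotation of the binary word encoding $P$ (the definition of $\mathcal{C}_P$ via $R_1(\gamma^{(t)}) = \gamma^{(t+1)}$ pins down which rotations and in what cyclic order, but the key point is that restricting to the first $p$ coordinates just gives $\gamma$, independent of $k$ — this is immediate since $\gamma^k$ restricted to its first $p$ entries is $\gamma$, and the staircase correction $\Delta_{m-1}$ restricted to the first $p$ entries is $(m-1, m-2, \ldots, m-p)$, which depends on $k$ only through an overall additive shift, and an overall additive shift does not change which parts are legal reverse moves since it shifts $l(\lambda)-1$ by the same amount). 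That observation handles the "identical with brackets" claim — the main subtlety is checking the bracketing rules of Section~\ref{sec:rule} (the $\partition{s\\s+1}$-propagation clause) only ever look at local data within the first few coordinates, so they give the same answer for all $k$. Then for the second claim I would compute directly: for $j \geq 4$ and $j \leq p$, the part $\lambda_j$ has difference-label value $\gamma_j \in \{0,1\} \leq 1 < j - 1$, so $\lambda_j$ is not itself a legal move; and it can only be bracketed by the propagation rule if $\gamma_j = s$, $\gamma_{j+1} = s+1$ with $\gamma_{j+1}$ bracketed — but $s, s+1 \in \{0,1\}$ forces $s = 0, s+1 = 1$, and one checks inductively downward that the only bracketed $1$ in positions $\geq 3$ would itself need this pattern, eventually bottoming out, contradiction; so no bracket past position $3$. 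For $j > p$ the entries of $\gamma^k$ repeat, so the same bound $\gamma_j \leq 1 < j-1$ applies a fortiori.

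The hard part will be pinning down exactly the bracketing convention for these infinite/periodic roots and verifying the propagation clause terminates — i.e., that the chain of "$s+1$ is bracketed because the $s$ below a bracketed $s+1$..." cannot cascade upward past position $3$. I expect this to follow from the explicit structure of $\mathcal{C}_P$ (the cyclic ordering $\gamma^{(t)} \xrightarrow{1} \gamma^{(t+1)}$ forces $\gamma^{(t)}$ to begin with a $1$, or with the pattern $\partition{0\\1}$, etc.), so I would first establish a normal-form lemma describing the first three entries of each $\gamma^{(t)}$ (something like: $\gamma^{(t)}$ starts with $1$, or with $01$, and in the latter case the third entry is forced), and then the bracketing claim becomes a finite case check on those normal forms. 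Everything else is routine bookkeeping with the staircase shift.
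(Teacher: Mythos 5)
Your core computation is pointed in the right direction, and your observation that increasing $k$ shifts both $\lambda_j$ (for $j \le p$) and $l(\lambda)-1$ by the same amount $p$ --- so that playability of the first $p$ parts is independent of $k$ --- is a clean way to obtain the ``identical with brackets'' claim. However, there is a genuine wrong turn in how you account for the third bracketed part. The bracketing of elements of $\mathcal{C}_{P^k}$ is \emph{not} governed by the clause of the forest rules in Section~\ref{sec:rule} that propagates brackets across consecutive entries $s$, $s+1$: those rules are part of the construction of the quasi-infinite forest, and this proposition is precisely one of the facts used to \emph{justify} those rules, so invoking them here is circular. For an actual partition in the finite recurrent set, a part is bracketed if and only if it is playable, i.e.\ $\lambda_j \ge l(\lambda)-1$, full stop.

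The reason position $3$ is achievable is the one you noticed and then abandoned: when the last entry of the difference labelling is $0$, the partition $\lambda = \Delta_{pk-1} + (\gamma^{(t)})^k$ has only $pk-1$ parts, so the playability threshold relaxes from $(\lambda^-)_j \ge j-1$ to $(\lambda^-)_j \ge j-2$, and a difference-label entry equal to $1$ meets this exactly when $j \le 3$. This is what the paper's proof does, and Remark~\ref{rmk:bracket-root} spells out the resulting case analysis for $j = 1,2,3$. Your planned normal-form lemma and the downward-cascade termination argument for propagated brackets are therefore unnecessary, and pursuing them would leave you unable to produce the bracketed third part in cases such as $\gamma^{(t)}_3 = 1$ with $\gamma^{(t)}_p = 0$, where the bracket on position $3$ comes from genuine playability (because $\lambda_3 = (pk-3)+1 = l(\lambda)-1$) and not from any adjacency pattern.
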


\begin{proof}
    Recall that for a partition $\lambda$ in the reverse BS system, $\lambda_j$ is playable if and only if $\lambda_j \ge l(\lambda) - 1$. If $\lambda \in \mathcal{C}_{P^k}^+$, it must have the form $\lambda = \Delta_{kp-1} + (\gamma^{(t)})^k$ for some $t$. 
    Then $l(\lambda) \ge kp-1$, and thus $\lambda_j$ is playable if and only if $\lambda_j = kp - j + (\gamma^{(t)})^k_j \ge kp - 2$, or $1 \ge (\gamma^{(t)})^k_j \ge j - 2$. Hence $j \le 3$. 
    Therefore, the bracketed parts in elements of the recurrent set $\mathcal{C}_{P^k}$ are all among the first $3 \le p$ parts, and our proposition is verified.
\end{proof}

\begin{ex} \rm
    The recurrent sets in the finite reverse BS graphs $\mathcal{O}^\opp_{BWW}$ and $\mathcal{O}^\opp_{(BWW)^2}$ are
    \begin{align*}
        \mathcal{C}_{BWW}^+ &= \Bigg\{ \partition{\play{3} \\ \play{1}}, \partition{\play{2}\\ 1\\1}, \partition{\play{2}\\ \play{2}} \Bigg\} \text{ and } \mathcal{C}_{BWW} = \Bigg\{ \partition{\play{1} \\ \play{0} \\ 0}, \partition{\play{0}\\ 0\\1}, \partition{\play{0}\\ \play{1}\\0} \Bigg\} \\
        \mathcal{C}_{(BWW)^2}^+ &= \left\{ \partition{\play{6} \\ \play{4}\\ 3 \\ 3 \\ 1}, \partition{\play{5}\\ 4\\4 \\ 2 \\ 1\\ 1}, \partition{\play{5}\\ \play{5}\\ 3 \\ 2 \\ 2} \right\}  \text{ and } \mathcal{C}_{(BWW)^2} = \left\{ \partition{\play{1} \\ \play{0} \\ 0\\ 1 \\ 0 \\ 0}, \partition{\play{0}\\ 0\\1\\ 0 \\0 \\ 1}, \partition{\play{0}\\ \play{1}\\0\\ 0 \\ 1 \\ 0} \right\}.
    \end{align*}
    Their difference labellings, including their brackets, are identical if we restrict them to the first $3$ parts.
\end{ex}

\begin{remark}\label{rmk:bracket-root} \rm Moreover, as we restrict the roots to their first $p$ parts, we only need to know how to bracket the recurrent cycle $\mathcal{C}_P$. For a partition $\lambda \in \mathcal{C}_P$, that is, $\lambda^-$ is one of the roots, we have:
\begin{itemize}
    \item $\lambda_1$ is always bracketed, because $\lambda_1 \ge p - 1 \ge l(\lambda) - 1$.
    \item $\lambda_2$:
        \begin{itemize}
            \item If $\lambda^-_2 = 1$ then it is bracketed, because then $\lambda_2 = (p-2)+1 = p-1 \ge l(\lambda)-1$.
            \item If $\lambda^-_2 = 0$ then it is bracketed only if $\lambda^-_p = 0$, since then $\lambda_2 = p-2 = l(\lambda) - 1$. 
        \end{itemize}
    \item $\lambda_3$ can only be bracketed if $\lambda^-_3 = 1$ and $\lambda^-_p = 0$, since then $\lambda_3 = (p-3) + 1 = p-2 = l(\lambda)-1$. 
\end{itemize}
\end{remark}

\end{answer}

%how to brackets other when playing some parts
\begin{question}
    In the forest, how do we decide which parts should be bracketed after playing a part?
\end{question}

\begin{answer} \rm Recall that due to the rules of the forest, each element has its length $p$ tail matching $\gamma^{(t)}$ for some $t$. 
There are two cases for what we play:
\begin{itemize}
\item[(1)] Play a part in the tail $\gamma^{(t)}$
\item[(2)] Play a part above the tail $\gamma^{(t)}$. 
\end{itemize}
Before discussing the bracketings, we prove a general property:

\begin{claim}\label{claim:no-3-inc}
    In a difference labelling $\lambda$, there are no triples $j_1<j_2<j_3$ such that $\lambda_{j_1} < \lambda_{j_2} < \lambda_{j_3}$. This assertion is still true for any finite \rebs{P^k} where $k \ge j_3$.
\end{claim}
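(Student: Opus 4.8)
The plan is to prove the statement by induction on the distance $d(\lambda,\gamma^{(t)})$ from the root, simultaneously for the quasi-infinite forest $\mathcal{F}_P$ and (restricted to the first $j_3$ coordinates) for the finite digraphs $\mathcal{O}^{op}_{P^k}$ with $k\ge j_3$. The base case is immediate: each root $\gamma^{(t)}$ is a block of a $\{0,1\}$-vector, and the recurrent difference labellings in $\mathcal{C}_{P^k}$ are genuinely $\{0,1\}$-vectors, so they take only the values $0$ and $1$ and cannot contain three strictly increasing entries.

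For the inductive step I would extract a normal-form lemma and then check that every reverse move fits it. Call a vector \emph{good} if all of its entries are $\ge 0$ and it contains no increasing triple. \emph{Key Lemma:} if $\lambda$ is good and $\lambda'$ is obtained from $\lambda$ either (i) by incrementing the prefix $\lambda[1:i-1]$ by $1$ and deleting the entry $\lambda_i$, or (ii) by incrementing the prefix $\lambda[1:i-1]$ by $1$ and replacing the suffix $\lambda[i:]$ by a $\{0,1\}$-vector, then $\lambda'$ is again good. Non-negativity of $\lambda'$ is clear (the prefix only grows, the retained suffix was already $\ge 0$, and a $\{0,1\}$-suffix is $\ge 0$). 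For the no-triple part, assume a triple $j_1<j_2<j_3$ in $\lambda'$ and split into four cases according to how many of $j_1,j_2,j_3$ lie in the incremented prefix. In case (i) each case transports the triple back to an increasing triple of $\lambda$ — adding $1$ to all three entries, or to the initial segment of the three that lies in the prefix, preserves the strict inequalities and the order of the indices — contradicting goodness of $\lambda$. In case (ii), a triple with two or three of its entries in the $\{0,1\}$-suffix is impossible; a triple with exactly one entry there forces some prefix entry $\lambda_a$ with $\lambda_a+1<0$, impossible since $\lambda$ is good; and a triple entirely in the prefix again contradicts goodness of $\lambda$.

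It then remains to observe that every move of the forest-construction rules of Section~\ref{sec:rule} has one of these two shapes, in each of its sub-cases. Playing a part above the tail (rule~(2)) is exactly case (i); the merging convention $\xrightarrow{i/i+1}$ for a consecutive pair $\play{s},\play{s+1}$ produces the same vector as case (i) applied at index $i$ (the entry written in equals $\lambda_{i+1}$, so the multiset of entries is that of a deletion), and the bracketing adjustments change no entries. Playing a part in the tail (rule~(1)) overwrites $\lambda[i:]$ by the rotation $\gamma^{(s+1)}$, which is case (ii) — and here one uses the fact, recorded at the start of Section~\ref{sec:rule}, that the periodic extension keeps all forest entries $\ge 0$, so that $\lambda$ is good. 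For the finite digraph $\mathcal{O}^{op}_{P^k}$ with $k\ge j_3$ one runs the same induction on the first $j_3$ coordinates: the finite reverse move is again ``increment a prefix, delete one entry, and adjust a short block at the very bottom of the difference labelling'', and $k\ge j_3$ together with $|P|\ge 3$ places that bottom block at positions well past $j_3$, so it cannot enter a triple at indices $\le j_3$; moreover the first $j_3$ coordinates stay $\ge 0$ throughout, since they start in $\{0,1\}$ and the prefix only grows. Equivalently, one may appeal to the coincidence of $\mathcal{O}^{op}_{P^k}$ with $\mathcal{F}_P$ up to level $k$, established as in Theorem~\ref{thm:fin-coincide}.

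I expect the genuine difficulty to be organizational rather than conceptual: verifying that every sub-case of the Section~\ref{sec:rule} rules (and of the finite reverse move) really does collapse to case (i) or case (ii) of the Key Lemma, and pinning the finite-case reconciliation down tightly enough that the hypothesis $k\ge j_3$ is precisely what the bookkeeping needs. The non-negativity of forest entries, which is what makes case (ii) go through, should be quoted from Section~\ref{sec:rule} rather than reproved here.
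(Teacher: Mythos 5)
Your argument is correct and rests on the same two combinatorial facts as the paper's --- every reverse play increments a \emph{prefix} of the difference labelling, and every entry enters the vector with value in $\{0,1\}$ --- but it is organized quite differently. The paper gives a one-shot global argument: since increments always apply to prefixes, the cumulative amount added to the part now at position $j_1$ is at least that added to the part at $j_3$, so an increasing triple in $\lambda$ would force $\lambda_{j_1}$ to have been negative back at the root, where all values lie in $\{0,1\}$. You instead run an induction on the level with an explicit invariant (non-negativity plus absence of increasing triples) preserved move by move, via a Key Lemma splitting the forest rules into a deletion case and a tail-replacement case. Your packaging buys two things the paper's terse paragraph glosses over: it makes explicit the non-negativity that is silently used in the assertion ``$\lambda_{j_3}\ge 2$,'' and it actually verifies the rule-(1) moves, where the tail is overwritten by a fresh $\{0,1\}$-vector rather than merely shifted --- a case the trace-back-to-the-root phrasing does not visibly cover. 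The paper's version, in exchange, is much shorter and its computation directly yields Corollary~\ref{corr:diff-not-far} (no consecutive increase by $2$ or more), which your triple-based invariant only gives after rerunning the argument for pairs. On the finite statement for $\mathcal{O}^{\mathrm{op}}_{P^k}$ with $k\ge j_3$, neither proof is fully rigorous; your remark about the bottom adjustment block staying below position $j_3$ is at least as detailed as what the paper supplies, so I would not count it as a gap relative to the source.
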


\begin{proof}
    Suppose there is such triple, then $\lambda_{j_3} \ge 2$, which means some part lower than $\lambda_{j_3}$ has already been played. But during any play, the amounts added to $\lambda_{j_1}$ and $\lambda_{j_2}$ are at least the amount added to $\lambda_{j_3}$. Since $\lambda_{j_3}$ was either $0$ or $1$ in the root of the tree containing $\lambda$, $\lambda_{j_1}$ was negative at the start. We obtain a contradiction. 
\end{proof}

\begin{corr}\label{corr:diff-not-far}
    A similar argument to the proof above shows that if $\lambda_j < \lambda_{j+1}$ then $\lambda_{j+1} - \lambda_{j} = 1$.
\end{corr}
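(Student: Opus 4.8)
The plan is to run the same kind of argument used for Claim~\ref{claim:no-3-inc}, now with the adjacent pair $(j,j+1)$ playing the role of the increasing triple. Suppose for contradiction that $\lambda_j<\lambda_{j+1}$ but $\lambda_{j+1}-\lambda_j\ge 2$. Every entry of a difference labelling occurring in the forest is non-negative (this is exactly what the infinite-periodic construction of Section~\ref{sec:rule} buys us), so $\lambda_j\ge 0$ and hence $\lambda_{j+1}\ge 2$. As in Claim~\ref{claim:no-3-inc}, in the root of the tree containing $\lambda$ every entry is $0$ or $1$, and a part only increases when rule~3 of Section~\ref{sec:rule} fires, i.e.\ when some part currently below it is played; so the value $\lambda_{j+1}\ge 2$ records that position $j+1$ has absorbed at least one such increment.

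Next I would compare totals. Write $a_j,a_{j+1}\ge 0$ for the numbers of increments absorbed by positions $j$ and $j+1$ since they left the tail, and $r_j,r_{j+1}\in\{0,1\}$ for the entries they started from, so that $\lambda_j=r_j+a_j$ and $\lambda_{j+1}=r_{j+1}+a_{j+1}$. The key point, identical to the one in Claim~\ref{claim:no-3-inc}, is that every play that increments position $j+1$ also increments position $j$: such a play removes a part lying strictly below position $j+1$, hence strictly below position $j$ as well, while the deletions and the tail-replacements of rule~1 only relabel indices without disturbing this order (and the replaced tail stays a rotation $\gamma^{(s)}$, so it remains $\{0,1\}$-valued). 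Therefore $a_j\ge a_{j+1}$, whence $r_{j+1}=\lambda_{j+1}-a_{j+1}\ge(\lambda_j+2)-a_{j+1}\ge(\lambda_j+2)-a_j=r_j+2\ge 2$, contradicting $r_{j+1}\in\{0,1\}$. Since $\lambda_j<\lambda_{j+1}$ are integers, this forces $\lambda_{j+1}-\lambda_j=1$. The same reasoning applies verbatim in the finite digraph \rebs{P^k} as soon as $k\ge j+1$, exactly as in the finite half of Claim~\ref{claim:no-3-inc}; alternatively one invokes the coincidence of \rebs{P^k} with the limiting forest up through level $k$ (cf.\ Theorem~\ref{thm:fin-coincide}).

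The step I expect to be the main obstacle is making this ``amount absorbed'' bookkeeping airtight across rule~1, where a block of the tail is not merely incremented but swapped for the next rotation $\gamma^{(s+1)}$ and the array is lengthened, so that positions get relabelled and a tail entry may be promoted above the tail already carrying value $1$. The cleanest rigorous route is to trade the running contradiction for a short induction on the number of plays, showing that the invariant ``$\lambda_q\ge\lambda_{q+1}-1$ for every $q$'' is preserved by each of rules~1--3. For rule~2 only the junction at position $i-1$ changes, and two applications of the invariant, $\lambda_{i-1}\ge\lambda_i-1$ and $\lambda_i\ge\lambda_{i+1}-1$, give $\lambda_{i-1}+1\ge\lambda_{i+1}-1$ (with a similar check when $i$ and $i+1$ are merged); for rule~1 one needs only $\gamma^{(s+1)}\in\{0,1\}^p$ and $\lambda_{i-1}\ge 0$ at the junction $i-1$, everything strictly inside or strictly above the tail being untouched up to a uniform $+1$. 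I would present the corollary at the brief level the phrase ``a similar argument'' invites, relegating this case-check to a parenthetical remark.
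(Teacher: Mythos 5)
Your proposal is correct and is essentially the paper's own argument: the paper gives no separate proof, merely invoking ``a similar argument'' to Claim~\ref{claim:no-3-inc}, and your increment bookkeeping ($a_j\ge a_{j+1}$, root entries in $\{0,1\}$, hence $r_{j+1}\ge r_j+2$ is impossible) is exactly that argument specialized to the adjacent pair, with some welcome extra care about how rule~1 relabels the tail.
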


  Now we explain how to change the bracketing in $\lambda$ in the forest in situation (2), that is, after we play a part above the tail $\gamma^{(t)}$.

\begin{prop}\label{prop:playable}
    Let $\lambda$ be a difference labelling in \rebs{N}. 
    If a part $\lambda_j$ is playable, let $\psi = R_j(\lambda)$. Then $\psi_{j}$ is playable if and only if $\abs{\lambda_j - \lambda_{j+1}} \le 1$. Moreover, if $\lambda_j \ge \lambda_{j+1}$, any parts $\psi_s$ where $s \ge j +3$ are not playable. Otherwise $\lambda_j + 1 = \lambda_{j+1}$ and any parts $\psi_s$ where $s \ge j +4$ are not playable.
\end{prop}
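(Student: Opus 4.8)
I want to prove Proposition~\ref{prop:playable}, which describes exactly which parts of $\psi = R_j(\lambda)$ remain playable after playing a part $\lambda_j$ that lies above the tail. Let me think about what the reverse BS operation does to the difference labelling, and use the bracketing criterion (a part $\mu_s$ of a partition $\mu$ is playable iff $\mu_s \ge \ell(\mu) - 1$) translated into difference-labelling terms.

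Let me set up coordinates. In the finite setting $\mathcal{O}^{\opp}_{P^k}$, a difference labelling $\lambda$ corresponds to the partition $\lambda^+ = \lambda + \Delta_{m-1}$ with $m = l(\lambda^+)$. Playing $R_j$ deletes row $j$ (length $\lambda^+_j$) and reinserts it as the leftmost column, so the new partition $\psi^+$ has $m' = \lambda^+_j$ rows: the new first column has height $\lambda^+_j$, rows $1,\dots,j-1$ each grow by one, row $j$ disappears, and rows $j+1,\dots,m$ stay. In difference-labelling terms relative to $\Delta_{m'-1}$ this is exactly Rule 3 in Section~\ref{sec:rule}: entries above position $j$ increase by $1$, entry $j$ is deleted, entries below shift up. The key numerical fact is $m' = \lambda^+_j = (m-j) + \lambda_j$, so $\ell(\psi^+) - 1 = m' - 1 = (m-j) + \lambda_j - 1$.

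First I would compute when $\psi_j$ (which is the old $\psi^+_{j+1} = \lambda^+_{j+1} = (m-j-1) + \lambda_{j+1}$, now sitting in position $j$ of $\psi^+$) is playable: the criterion $\psi^+_j \ge m' - 1$ becomes $(m-j-1)+\lambda_{j+1} \ge (m-j)+\lambda_j - 1$, i.e. $\lambda_{j+1} \ge \lambda_j$. Combined with Corollary~\ref{corr:diff-not-far} (which forces $\lambda_{j+1} - \lambda_j \in \{0,1\}$ whenever $\lambda_{j+1} > \lambda_j$) and the easy direction that if $\lambda_j > \lambda_{j+1}$ then $\lambda_j \ge \lambda_{j+1}+1$ so $\psi^+_j \le m'-2$, this gives $\psi_j$ playable $\iff |\lambda_j - \lambda_{j+1}| \le 1$. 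For the "moreover" part, I need to bound how far down playability can reach: a part $\psi^+_s$ with $s \ge j+1$ equals $\psi^+_{s} = \lambda^+_{s+1} = (m - s - 1) + \lambda_{s+1}$, and $\psi^+_s \ge m'-1 = (m-j) + \lambda_j - 1$ rearranges to $\lambda_{s+1} \ge (s - j + 1) + \lambda_j - 1 = \lambda_j + (s-j) - 1$. So if $\lambda_j \ge \lambda_{j+1}$ and $s+1 \ge j+3$, i.e. $s - j \ge 2$, this demands $\lambda_{s+1} \ge \lambda_j + 1 > \lambda_{j+1}$; but then $\lambda_j, \lambda_{j+1} < \lambda_{s+1}$ would have to be compatible with Claim~\ref{claim:no-3-inc} — more precisely, since $\lambda_j \ge \lambda_{j+1}$ and $\lambda_{s+1} \ge \lambda_j + 1$, we get a violation once we also locate an index strictly between with a value below both, or we argue directly: $\lambda_{s+1} \ge \lambda_j+1 \ge 2$ forces a lower part to have been negative at the root, exactly the argument of Claim~\ref{claim:no-3-inc}. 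The case $\lambda_j + 1 = \lambda_{j+1}$ is the same computation with the threshold shifted by one, giving the bound $s \ge j+4$.

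The main obstacle I expect is handling the tail/infinite-vector subtlety cleanly: the proposition is stated for the forest $\mathcal{O}^{\opp}_N$, where $\lambda$ is a truncation of an infinite periodic vector, so I cannot literally invoke "$\ell(\lambda^+) - 1$" without knowing enough parts are present. The fix is to run the argument in the finite $\mathcal{O}^{\opp}_{P^k}$ for $k$ large (legitimate by the coincidence statements, e.g. Theorem~\ref{thm:fin-coincide} and its analogue), where the playability criterion is exact, and note that all the relevant indices $j, j+1, \dots, s$ fall within the "stable" range; since the claimed conclusions only involve finitely many positions, they transfer to the forest. The secondary nuisance is the $\lambda_j < \lambda_{j+1}$ case where, by Rule 2 of Section~\ref{sec:rule}, playing $\lambda_j$ is the same as playing $\lambda_{j+1}$ (the "$\xrightarrow{i/i+1}$" labelling); I should note that in that case the analysis is really being applied to the deletion of position $j+1$, which is why the non-playability bound starts at $j+4$ rather than $j+3$ — one extra position is "used up" by the pair $\partition{\lambda_j \\ \lambda_{j+1}} = \partition{s \\ s+1}$ both being playable. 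I would make this bookkeeping explicit by treating the two cases $\lambda_j \ge \lambda_{j+1}$ and $\lambda_j + 1 = \lambda_{j+1}$ separately from the start, each time writing down $m' = \lambda^+_j$, the new threshold $m'-1$, and solving the inequality $\psi^+_s \ge m'-1$ for the first $s$ where it fails.
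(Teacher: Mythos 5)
Your overall strategy---pass to the partition $\lambda^+$, use the criterion that a part of a partition $\mu$ is playable iff it is at least $\ell(\mu)-1$, and solve the resulting inequality position by position---is exactly the paper's, but the central computation is wrong. When $R_j$ is played, the removed row is reinserted as the leftmost column, so \emph{every} surviving row gains a cell: $\psi^+_s = \lambda^+_{s+1}+1$ for $s\ge j$, not $\psi^+_s=\lambda^+_{s+1}$ as you assert (your "rows $j+1,\dots,m$ stay" is false for the partition; it is only the \emph{difference labelling} that is unchanged below position $j$, because the added cell is absorbed by the shift in the staircase index). With the missing $+1$, your criterion for $\psi_j$ comes out as $\lambda_{j+1}\ge\lambda_j$, and combining with Corollary~\ref{corr:diff-not-far} gives $\lambda_{j+1}\in\{\lambda_j,\lambda_j+1\}$, which is not the claimed $\abs{\lambda_j-\lambda_{j+1}}\le 1$ and is in fact false: in $\mathcal{F}_{BW}$, playing $R_1$ on $\partition{\play{2}\\ \play{1}\\ \play{0}\\ \play{1}}$ yields a bracketed first part even though $\lambda_2=\lambda_1-1$. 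The correct threshold is $\lambda_{s+1}\ge\lambda_j+(s-j)-1$; you do write this down later, but only because a second arithmetic slip in the rearrangement cancels the first, and it contradicts your own $s=j$ computation.

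The "moreover" part also has problems. You try to rule out playability for $s-j\ge 2$, but the proposition only asserts non-playability from $s=j+3$ on, and the paper's proof explicitly notes that $\psi_{j+2}$ \emph{can} be playable (precisely when $\lambda_{j+3}=\lambda_j+1$); the inequality that must be excluded is $\lambda_{j+4}\ge\lambda_j+2$. Your exclusion argument is also not sound as written: $\lambda_{j+1}<\lambda_{s+1}$ is only a pair of indices, not a forbidden triple for Claim~\ref{claim:no-3-inc}, and the fallback "$\lambda_{s+1}\ge\lambda_j+1\ge 2$" assumes $\lambda_j\ge 1$ without justification. What is needed, and what the paper does, is the bound $\lambda_{j+4}\le\max\{\lambda_j,\dots,\lambda_{j+3}\}\le\lambda_j+1$, obtained by combining Claim~\ref{claim:no-3-inc} with Corollary~\ref{corr:diff-not-far}. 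Your reduction of the case $\lambda_{j+1}=\lambda_j+1$ to playing $R_{j+1}$ does match the paper and correctly explains the shift from $j+3$ to $j+4$.
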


\begin{proof}
    For the first statement, we consider $\Lambda = \lambda^+$.
    Since playing parts other than $\Lambda_j$ and $\Lambda_{j+1}$ does not affect the gap between them, we only consider the moment when performing $R_j$. The result is a partition $\overline{\Lambda}$ with $\Lambda_j$ parts. Thus, a part is playable in the state $R_j(\lambda)$ if and only if its size is at least $\Lambda_j - 1$. Because $R_j$ adds one to every other part, and the difference labeling is obtained by subtracting a staircase, which yields $\lambda_{j}-\lambda_{j+1} = (\Lambda_j - \Lambda_{j+1}) - 1$, we obtain the first statement.

    Now if $\lambda_j \ge \lambda_{j+1}$, let $\abs{N} = m$ where we subtract $\Delta_{m-1}$ to get the difference labellings. Then $\psi = R_j(\lambda)$ has $m-j+\lambda_j$ parts. Note that $\psi_{j+3} = \lambda_{j+4}$. By Claim~\ref{claim:no-3-inc}, we see that $\lambda_{j+4} \le \max \{\lambda_s : j \le s \le j+3\}$. By Corollary~\ref{corr:diff-not-far}, that maximum is at most $\lambda_j + 1$. Thus $$\psi^+_{j+3} = m -j-3+\lambda_{j+4} \le m-2-j+\lambda_j = l(\psi) - 2.$$ Thus $\psi_{j+3}$ is not playable and neither are the later parts. Moreover, $\psi^+_{j+1} = m-j-1+\lambda_{j+2} \ge m-j+\lambda_j-1$ if and only if $\lambda_{j+2} \ge \lambda_{j}$. Similarly, $\psi_{j+2}$ is playable if and only if $\lambda_{j+3} \ge \lambda_{j}+1$. Hence, $\lambda_{j+3} = \lambda_j + 1$.
    
    Otherwise $\lambda_{j+1} - \lambda_{j} = 1$, adding a staircase we get $\lambda^+_j = \lambda^+_{j+1}$ and thus we can play $R_{j+1}$ instead of $R_j$.
\end{proof}

Proposition~\ref{prop:playable} explains the rule $2$ of the forest. Moving on, we explain how a part in the ``tail" $\gamma^t$ is played, provided that the multiple $k$ (in the necklace $N=P^k$) is large enough. 

\begin{prop}\label{prop:play-tail}
    Assume that $\lambda = \partition{\alpha\\ \gamma^{(t)} \\ \gamma^{(t)} \\ \beta}$ is a difference labelling in \rebs{P^k}, where $\alpha, \beta$ are some difference labellings, that $l(\alpha) = a$ and that $\lambda_{a+1}$ is playable.  Then 
    \begin{equation}\label{eq:play-tail}
        R_{a+1}(\lambda) = \partition{\alpha + 1 \\ \gamma^{(t+1)} \\ \gamma^{(t)}[2:] \\ \beta'}
    \end{equation}
    and the $\gamma^{(t+1)}$ segment is bracketed similarly to the root.
\end{prop}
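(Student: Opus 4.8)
\medskip
\noindent\textbf{Proof plan.}
The plan is to pass from difference labellings to ordinary partitions, apply the explicit description of the reverse move $R_{a+1}$ there, and then translate back using a ``staircase cancellation'' that is available precisely because $k$ is large. Write $m=kp$, $\Lambda=\lambda^{+}$ and $L=l(\Lambda)$, so that $\Lambda_i=\lambda_i+(m-i)$ whenever $i\le m$, and note $\lambda_{a+1}=\gamma^{(t)}_1$. Since $\lambda_{a+1}$ is playable we have $\Lambda_{a+1}\ge L-1$, and performing the reverse move on the partition $\Lambda$ produces
\begin{equation*}
R_{a+1}(\Lambda)=\bigl(\Lambda_1+1,\dots,\Lambda_a+1,\ \Lambda_{a+2}+1,\dots,\Lambda_L+1,\ \underbrace{1,\dots,1}_{\Lambda_{a+1}-(L-1)}\bigr),
\end{equation*}
a partition with exactly $l\bigl(R_{a+1}(\Lambda)\bigr)=\Lambda_{a+1}$ parts; one recovers $R_{a+1}(\lambda)$ by subtracting $\Delta_{m-1}$.

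For the first $a$ coordinates this gives $R_{a+1}(\lambda)_i=\Lambda_i+1-(m-i)=\lambda_i+1$, i.e.\ the segment $\alpha+1$, and these are all bracketed, because $\Lambda_i\ge\Lambda_{a+1}$ forces $\Lambda_i+1\ge l\bigl(R_{a+1}(\Lambda)\bigr)-1$ (this is Rule~$3$). The crucial range is the next one: whenever $i$ and $i+1$ are both $\le m$ one has $\Lambda_{i+1}+1-(m-i)=\lambda_{i+1}$, since the $+1$ from incrementing the surviving parts is cancelled by the drop of one between positions $i+1$ and $i$ of $\Delta_{m-1}$. Because $k$ is large enough we may assume $a+2p\le m$; since moreover $L\ge a+2p$ (both $\gamma^{(t)}$ blocks, and then $\beta$, lie below $\alpha$), the formula applies and $R_{a+1}(\lambda)_i=\lambda_{i+1}$ for all $a+1\le i\le a+2p-1$ — the coordinates below the deleted one are simply shifted up by one. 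Reading this off, coordinates $a+1,\dots,a+p$ of $R_{a+1}(\lambda)$ become $\lambda_{a+2},\dots,\lambda_{a+p+1}=(\gamma^{(t)}_2,\dots,\gamma^{(t)}_p,\gamma^{(t)}_1)$, coordinates $a+p+1,\dots,a+2p-1$ become $\lambda_{a+p+2},\dots,\lambda_{a+2p}=(\gamma^{(t)}_2,\dots,\gamma^{(t)}_p)=\gamma^{(t)}[2:]$, and everything from coordinate $a+2p$ on (the shifted $\beta$ together with the $1$'s appended at the bottom and their staircase corrections) is what we call $\beta'$. Finally $(\gamma^{(t)}_2,\dots,\gamma^{(t)}_p,\gamma^{(t)}_1)=\gamma^{(t+1)}$: this is exactly the identity $R_1(\gamma^{(t)})=\gamma^{(t+1)}$ defining $\mathcal{C}_P$ (it is the special case $a=0$, $m=p$, a single block, of the computation just made), in agreement with the periodic-window description of the roots in Section~\ref{sec:rule}. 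This establishes \eqref{eq:play-tail}.

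It remains to check that the $\gamma^{(t+1)}$ segment of $\psi:=R_{a+1}(\lambda)$ carries the same brackets as the root $\gamma^{(t+1)}$. Here I would use that $\psi_j$ is bracketed iff $\psi^{+}_j\ge l(\psi^{+})-1$, together with $l(\psi^{+})=\Lambda_{a+1}=\gamma^{(t)}_1+m-a-1$. For a coordinate $j=a+1+r$ with $0\le r\le p-1$ one has $\psi_j=\gamma^{(t+1)}_{r+1}$ and $\psi^{+}_j=\gamma^{(t+1)}_{r+1}+m-a-1-r$, so $\psi_j$ is bracketed iff $\gamma^{(t+1)}_{r+1}\ge\gamma^{(t)}_1+r-1$. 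Since $\gamma^{(t+1)}_{r+1}\in\{0,1\}$ this fails for every $r\ge3$, while for $r=0,1,2$ a short case check, using $\gamma^{(t)}_1=\gamma^{(t+1)}_p$, reproduces verbatim the bracketing rule for roots stated in Proposition~\ref{prop:roots} and Remark~\ref{rmk:bracket-root} (coordinate $a+1$ always bracketed; coordinate $a+2$ bracketed iff $\gamma^{(t+1)}_2=1$ or $\gamma^{(t+1)}_p=0$; coordinate $a+3$ bracketed iff $\gamma^{(t+1)}_3=1$ and $\gamma^{(t+1)}_p=0$; nothing beyond $a+3$).

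I expect the only genuinely delicate point to be the length bookkeeping: making the phrase ``$k$ large enough'' quantitative (it suffices that $a+2p\le m$, which simultaneously validates the staircase cancellation on the range that matters and keeps all of $\alpha$ and both $\gamma^{(t)}$ blocks inside the first $m$ coordinates), and confirming that the block of $1$'s appended at the bottom of $R_{a+1}(\Lambda)$ never reaches up into the $\gamma^{(t+1)}$ or $\gamma^{(t)}[2:]$ segments, so that it is safely absorbed into $\beta'$ (this uses $L\ge a+2p$, built into the assumed shape of $\lambda$). Everything else is a routine translation between partitions and difference labellings, of the kind already carried out in Proposition~\ref{prop:playable}.
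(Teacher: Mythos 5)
Your proposal is correct and follows essentially the same route as the paper's proof: pass to the ordinary partition $\Lambda=\lambda^{+}$, apply the reverse move there, translate back by subtracting the staircase, and verify the brackets on the new tail by comparing $\Psi_{a+j}$ with $l(\Psi)-1$. Your write-up is in fact more careful than the paper's on the staircase cancellation, the fate of the appended $1$'s, and the explicit case check against Remark~\ref{rmk:bracket-root}, but no new idea is involved.
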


\begin{proof}
   We have $l(\gamma^{(t)}) = p$ and note that $\beta$ is not necessarily non-negative. Let $n = pk-1$. The partition corresponding to $\lambda$ is
    \begin{equation*}
        \Lambda = \lambda^+ = \lambda + \Delta_{n}
    \end{equation*}
    of length $l \ge a + 2p$. So $\lambda_{a+1}$ is playable if and only if$\Lambda_{a+1} = \gamma^{(t)}_1 + n - a \ge l - 1$, then $1 \ge \gamma^{(t)}_1 \ge l - n + a - 1$. Playing $\lambda_{a+1}$ adds $1$ to each of the entries of both $\alpha^+$ and $\alpha$, erases $\lambda_{a+1}$ and keeps the rest of $\lambda$, thus we obtain \eqref{eq:play-tail}. Let $\Psi = R_{a+1}(\Lambda)$ and $\psi =  R_{a+1}(\lambda)$, so $l(\Psi) = \gamma^{(t)}_1 + n - a$. Now in the root $\gamma^{(t+1)}$, the part $\gamma^{(t+1)}_j$ is bracketed if and only if
    \begin{equation*}
        \gamma^{(t+1)}_j + (p-j) \ge \begin{cases}
            p - 1 \qquad \text{ if } \gamma^{(t)}_1 = 1 \\
            p - 2 \qquad \text{ if } \gamma^{(t)}_1 = 0
        \end{cases},
    \end{equation*}
    thus $\gamma^{(t+1)}_j \ge j - 2 + \gamma^{(t)}_1$. We show the same $j^{th}$ part of the $\gamma^{(t+1)}$ segment is bracketed in $\psi$. Actually, that part is $\psi_{a+j}$, and
    \begin{equation*}
        \Psi_{a+j} = \gamma^{(t+1)}_j + n - a - j + 1 \ge \gamma^{(t)}_1 + n - a - 1 = l(\Psi) - 1.
    \end{equation*}
    This concludes the proof.
\end{proof}

\begin{prop}\label{prop:ignore-upper-parts}
    Let $\lambda = R_\sigma((\gamma^{(t)})^k)$, for some $(\gamma^{(t)})^k$ in $\mathcal{C}_{P^k}$ and some arbitrary playing sequence $\sigma$ of length $l$. If $k$ is large enough, then $\lambda$ is of the form $\lambda = \partition{\alpha \\ \gamma^{(s)} \\ \gamma^{(s)} \\ \beta}$ for some $1 \le s \le p$ and $l(\alpha) + 1 \ge \sigma_l$. Moreover, $\alpha$ is determined uniquely for all large $k$ including the brackets, and if rule $1$ applies to $R_{\sigma_l}$ then the first $\gamma^{(s)}$ is bracketed similarly to the root $\gamma^{(s)}$ as in Remark~\ref{rmk:bracket-root}.
\end{prop}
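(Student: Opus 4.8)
The plan is to induct on the length $l$ of the playing sequence $\sigma$, maintaining the stronger inductive hypothesis that, for all sufficiently large $k$, the element $\lambda = R_\sigma((\gamma^{(t)})^k)$ has the stated block form $\lambda = \partition{\alpha \\ \gamma^{(s)} \\ \gamma^{(s)} \\ \beta}$ with $l(\alpha)+1 \ge \sigma_l$, that $\alpha$ together with its brackets is independent of $k$ once $k$ is large, and that the first displayed copy of $\gamma^{(s)}$ is bracketed exactly as the root when rule $1$ was the operation applied at the last step. The base case $l=0$ is immediate: $\lambda = (\gamma^{(t)})^k = \partition{\gamma^{(t)}\\ \gamma^{(t)} \\ \cdots}$, so $\alpha$ is empty and the bracketing of the top copy is the root bracketing by Proposition~\ref{prop:roots} and Remark~\ref{rmk:bracket-root}.

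For the inductive step, write $\sigma = \sigma' i$ where $\sigma'$ has length $l-1$, and let $\lambda' = R_{\sigma'}((\gamma^{(t)})^k)$, which by the inductive hypothesis has the form $\partition{\alpha' \\ \gamma^{(s')} \\ \gamma^{(s')} \\ \beta'}$ with $l(\alpha')+1 \ge \sigma'_{l-1}$ and $\alpha'$ eventually $k$-independent. Since $i = \sigma_l$ is playable in $\lambda'$, I split into the two cases of Section~\ref{sec:rule}. If $i$ lies above the tail region, I invoke Proposition~\ref{prop:playable}: playing $R_i$ deletes $\lambda'_i$, increments and brackets the parts above, and affects only a bounded window below $i$, so the result still ends in at least two consecutive copies of some $\gamma^{(s)}$ (the tail is untouched for $k$ large, and in fact the entire lower block structure persists), the new $\alpha$ is obtained from $\alpha'$ by a $k$-independent local modification, and the constraint $l(\alpha)+1 \ge i$ follows because playability of $\lambda'_i$ forces $\lambda'_i$, hence $i$, to sit above enough parts. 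If instead $i$ is the first index of a tail copy (rule $1$), I invoke Proposition~\ref{prop:play-tail}: one copy $\gamma^{(s')}$ becomes $\gamma^{(s'+1)}$, the rest shifts, the parts above are incremented and bracketed, and crucially the new first tail copy $\gamma^{(s'+1)}$ is bracketed exactly as its root; here $s = s'+1$, and $l(\alpha) = l(\alpha') + (\text{offset})$ is again $k$-independent once $k$ is large enough that the extra block was available.

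The main obstacle is the uniform-in-$k$ statement: I must check that the threshold on $k$ past which $\alpha$ and its brackets stabilize can be chosen depending only on $l$ (not on $k$), and that no "boundary effect" from the bottom of the finite partition ever propagates up into $\alpha$. Concretely, the number of tail copies consumed is at most $l$ (each step of rule $1$ eats one, rule $2$ eats none), and each step changes only a bounded prefix, so taking $k$ larger than some explicit linear function of $l$ suffices; the bracketing computations in Claim~\ref{claim:no-3-inc}, Corollary~\ref{corr:diff-not-far}, Proposition~\ref{prop:playable} and Proposition~\ref{prop:play-tail} all depend only on local differences $\lambda_j - \lambda_{j+1}$ and on $l(\lambda) - n$, which for $\lambda \in \mathcal{O}^{\opp}_{P^k}$ after $\le l$ plays is bounded by a function of $l$ alone. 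Assembling these bounds gives the uniform threshold and completes the induction.
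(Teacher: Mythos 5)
Your proposal is correct and follows essentially the same route as the paper: both arguments proceed play-by-play (yours phrased as an explicit induction on $l$), invoking Proposition~\ref{prop:play-tail} when rule $1$ applies and Proposition~\ref{prop:playable} when rule $2$ applies, and both obtain uniformity in $k$ from the observation that each play consumes at most one block of the tail, so $k \ge l+2$ (a bound linear in $l$) suffices. Your write-up is somewhat more explicit about the induction structure and the stabilization of $\alpha$, but there is no substantive difference in method.
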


\begin{proof}
    Take $k \ge l + 2$, and we start with $(\gamma^{(t)})^k = \partition{\gamma^{(t)} \\ \gamma^{(t)} \\ \beta} = \partition{\gamma^{(t)}_1 \\ \gamma^{(t+1)} \\ \gamma^{(t+1)} \\ \beta'} = \partition{\gamma^{(t)}_1\\ \gamma^{(t)}_2 \\ \gamma^{(t+2)} \\ \gamma^{(t+2)} \\ \beta''} $, in which at most $3$ first parts are playable by Proposition~\ref{prop:roots}. In each play, if rule $1$ applies, then Proposition~\ref{prop:play-tail} guarantees the next state has the same form as in the proposition because in each play we only lose at most $1$ block of $\gamma^{(s)}$ for some $s$. Otherwise, rule $2$ applies which does not change the tail (consisting of blocks of $\gamma^{(s)}$). Proposition~\ref{prop:playable} shows that rule $2$ also determines the playable parts in the next state. The conclusion follows for all trees rooted at $(\gamma^{(t)})^h$ for $h \ge k$, since $\sigma$ only takes place in the first $k$ blocks.
\end{proof}

\end{answer}

\begin{question}
    Why can we restrict the infinite vector to some finite prefix without losing any playable parts?
\end{question}

\begin{answer} \rm 
    We defined earlier that \rebs{P^\infty} $= \lim_{k \to \infty}$ \rebs{P^k}. Because of Proposition~\ref{prop:ignore-upper-parts}, we see that for a playing sequence $\sigma$ and $k$ large enough, the elements $\lambda_h = R_\sigma((\gamma^{(t)})^h)$ for $h \ge k$ share the same prefix $\lambda = \partition{\alpha \\ \gamma^{(s)}}$ as in the statement of the proposition. Thus we can represent all those $\lambda_h$'s by that prefix $\lambda$ in \rebs{P^\infty}. Actually, $\lambda$ is in the form $\partition{\alpha \\ \gamma^{(s)} \\ \gamma^{(s)} \\ \vdots}$ so that rule $1$ works for playing a part in the "tail" of $\lambda$. 
    Rule $2$ also works since each element $\lambda$ has a tail of length $p \ge 3$ and by Proposition~\ref{prop:playable}, the number of new bracketed parts is at most $3$, thus the lower parts are not affected.
\end{answer}

  The conclusion of this section is that, the finite reverse BS forest \rebs{P^k} coincides with the infinite digraph \rebs{P^\infty} obtained from the forest $\mathcal{F}_P$ (by modifying any branch of $\mathcal{F}_P$ as in Section~\ref{sec:1}) up to at least level $k$. Therefore, we confirm that \rebs{P^\infty} $= \lim_{k \to \infty}$ \rebs{P^k}.

%General quasi-infinite forest
\section{The limiting generating function for general \texorpdfstring{\rebs{P^k}}{orb} by level sizes}

We recall our second main theorem from the Introduction.

\begin{T1}
    For primitive necklaces $P$ with $\abs{P} \geq 3$, there is a power series $H_P$ in $\mathbb{Z}[[x]]$ such that the sequence of generating functions
    $(\mathcal{D}_{P^k})_{k=0}^\infty$
    converges to $H_P$ coefficient-wise. Moreover, $H_P$ is a rational function having denominator polynomial of degree at most $\abs{P}$ and numerator degree at most $2 \abs{P}$.
\end{T1}

  We also recall a property that was mentioned in Section~\ref{sec:3}:
\begin{claim}\label{claim:limit-branch}
    Each $\gamma^{(t)} \in \mathcal{C}_P$ has at most $3$ playable parts. Moreover, if $\gamma^{(t)}_p = 1$, then $\gamma^{(t)}_2$ is playable if and only if$\gamma^{(t)}_2 = 1$.
\end{claim}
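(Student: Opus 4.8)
The statement is a bookkeeping consequence of Proposition~\ref{prop:roots} together with Remark~\ref{rmk:bracket-root}, so the plan is simply to make that dependence explicit and to isolate the one genuinely new point, namely the refinement about $\gamma^{(t)}_2$. First I would recall from Proposition~\ref{prop:roots} that the bracketing of a root $\gamma^{(t)} \in \mathcal{C}_P$ is, by construction, inherited from the bracketing of the first $p$ parts of the corresponding element of $\mathcal{C}_{P^k}$, and that only parts of index $1,2,3$ can ever be bracketed there; this already yields the bound of at most $3$ playable parts.

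For the refinement, set $\Lambda = (\gamma^{(t)})^+ = \gamma^{(t)} + \Delta_{p-1}$, an ordinary partition, and apply the reverse-BS playability criterion: the part $\Lambda_j$ is playable exactly when $\Lambda_j \ge l(\Lambda) - 1$. The key observation is that $\Lambda_p = \gamma^{(t)}_p + 0 = \gamma^{(t)}_p \in \{0,1\}$, so $l(\Lambda) = p$ when $\gamma^{(t)}_p = 1$ and $l(\Lambda) = p-1$ when $\gamma^{(t)}_p = 0$ (a trailing zero of $\Lambda$ being discarded). Substituting $\Lambda_j = \gamma^{(t)}_j + (p-j)$ and using $\gamma^{(t)}_j \in \{0,1\}$ reduces playability to an inequality on $j$: in the case $\gamma^{(t)}_p = 0$ it becomes $\gamma^{(t)}_j \ge j - 2$, so parts $1$ and $2$ are always playable, part $3$ is playable iff $\gamma^{(t)}_3 = 1$, and no part of index $\ge 4$ is; in the case $\gamma^{(t)}_p = 1$ it becomes $\gamma^{(t)}_j \ge j - 1$, so part $1$ is always playable, part $2$ is playable iff $\gamma^{(t)}_2 = 1$, and no part of index $\ge 3$ is. In both cases there are at most three playable parts, and when $\gamma^{(t)}_p = 1$ the part $\gamma^{(t)}_2$ is playable precisely when $\gamma^{(t)}_2 = 1$, which is exactly the claim.

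I do not expect a genuine obstacle here: the only two points that need care are remembering that $(\gamma^{(t)})^+$ loses its last entry exactly when $\gamma^{(t)}_p = 0$ (so that $l(\Lambda)$ drops to $p-1$ in that case), and keeping the standing hypothesis $\abs{P} = p \ge 3$ so that the indices $2$ and $3$ are meaningful and the case split does not degenerate. As a consistency check I would compare the conclusion against the worked example $\mathcal{C}_{BWW}^+$ in Section~\ref{sec:3}, where $\gamma^{(2)} = (0,0,1)$ has $\gamma^{(2)}_p = 1$ and $\gamma^{(2)}_2 = 0$, and indeed only its first part is playable.
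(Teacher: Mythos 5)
Your proof is correct and follows essentially the same route as the paper's own (unpublished, commented-out) argument: pass to $\Lambda=(\gamma^{(t)})^{+}=\gamma^{(t)}+\Delta_{p-1}$, apply the playability criterion $\Lambda_j\ge l(\Lambda)-1$ with $l(\Lambda)\in\{p-1,p\}$ according to whether $\gamma^{(t)}_p$ is $0$ or $1$, and reduce to $\gamma^{(t)}_j\ge j-2$ or $\gamma^{(t)}_j\ge j-1$ respectively. Your explicit case split on $\gamma^{(t)}_p$ and the cross-check against $\mathcal{C}_{BWW}$ are consistent with Proposition~\ref{prop:roots} and Remark~\ref{rmk:bracket-root}, so nothing is missing.
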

\begin{comment}
\begin{proof}
    For $\gamma^t \in \mathcal{C}_P$, its original form $\mu (\gamma^t)$ has $p$ or $p-1$ parts, and thus a part $\gamma^t_j$ is playable only if $\gamma^t_j + (p-j) \ge l(\mu (\gamma^t)) - 1 \ge p - 1 - 1$. But $\gamma^t_j \in \{0; 1\}$ so $j \le 3$. 
    
    Now if $\gamma^t_p = 1$, then $\gamma^t_2$ is playable if and only if$\gamma^t_2 + (p-2) \ge l(\mu (\gamma^t)) - 1 = p - 1$ if and only if$\gamma^t_2 = 1$.
\end{proof}
\end{comment}

  The next claim points out a ``special" element in the recurrent set.
\begin{claim}\label{claim:special-tree}
    For any primitive necklace $P$ of length at least $3$, there is at least one $\gamma^{(t)} \in \mathcal{C}_P$
    (and possibly more than one) of the form $\partition{\play{\sigma} \\ 0 \\ \vdots},$ where $\sigma \in \{0, 1 \}$.
\end{claim}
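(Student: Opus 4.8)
The plan is to reduce the claim to an elementary statement about the cyclic binary word $P$. By construction, $\mathcal{C}_P$ is the set of all $p$ cyclic rotations of $P$, read as $\{0,1\}$-vectors of length $p$, and by Remark~\ref{rmk:bracket-root} the first entry of each such vector is always bracketed. Hence an element of $\mathcal{C}_P$ of the displayed shape $\partition{\play{\sigma}\\ 0\\ \vdots}$ is exactly a rotation whose second coordinate is $0$: if a white bead sits at cyclic position $i$ (so $P_i = 0$), the rotation beginning one step earlier has second coordinate $P_i = 0$ and first coordinate $\sigma := P_{i-1} \in \{0,1\}$. So it suffices to produce a white bead in $P$.

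First I would dispose of the only obstruction to that: $P$ having no white bead means $P = BB\cdots B$, which is the single bead $B$ repeated $p$ times, hence a proper power --- impossible since $P$ is primitive and $p \ge 3 > 1$. So $P$ has at least one $0$, which already yields the bare form $\partition{\play{\sigma}\\ 0\\ \vdots}$.

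Because the displayed form carries no bracket on the second entry, I would then strengthen the choice so that this $0$ is genuinely unbracketed --- which is what makes the element ``special'', since by Remark~\ref{rmk:bracket-root} and Proposition~\ref{prop:roots} its first part then becomes its only playable part. By Remark~\ref{rmk:bracket-root} a second entry equal to $0$ is bracketed only when the last entry of the vector is also $0$, and the last entry of the rotation beginning just before position $i$ is $P_{i-2}$. So I want a white bead $P_i = 0$ with $P_{i-2} = 1$. If none existed, the set $Z$ of white positions would be closed under $i \mapsto i-2$, hence a union of cosets of the cyclic subgroup generated by $2$ in $\mathbb{Z}/p\mathbb{Z}$: for $p$ odd this subgroup is all of $\mathbb{Z}/p\mathbb{Z}$, forcing $Z = \mathbb{Z}/p\mathbb{Z}$ and $P = WW\cdots W$; for $p$ even it forces $Z$ to be the set of even residues or the set of odd residues, so that $P$ is a rotation of $(BW)^{p/2}$ --- in either case a proper power (here $p$ even and $\ge 3$ gives $p \ge 4$), contradicting primitivity. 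Thus the desired bead exists, and the corresponding $\gamma^{(t)}$ has exactly the stated shape.

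There is essentially no deep obstacle; the one thing to be careful about is the cyclic bookkeeping --- keeping straight which rotation realizes a given pair (second entry, last entry) --- together with remembering that $B^p$ and the alternating words are precisely the cases that the hypotheses ``$P$ primitive, $p \ge 3$'' exclude. If only the weaker reading of the claim is intended (second entry $0$, the bracket on it being irrelevant), the orbit argument can be dropped entirely and the proof collapses to the single observation that $P \ne B^p$.
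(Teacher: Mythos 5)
Your proof is correct, and it takes a genuinely different route from the paper's. The paper argues by cases on whether $P$ contains two consecutive black beads or two consecutive white beads (one of which must occur for a primitive necklace of length at least $3$, since an alternating cyclic word is either impossible for odd length or a power of $BW$), and in each case reads off the desired element at the boundary of a maximal run, namely a rotation of the form $(1,0,\ldots,1)$ or $(0,0,\ldots,1)$, then invokes Claim~\ref{claim:limit-branch} to see that the second part is unplayable. You instead give one uniform argument: reduce to finding a position $i$ with $P_i=W$ and $P_{i-2}=B$, and show that if none exists then the set of white positions is a nonempty union of cosets of $\langle 2\rangle$ in $\mathbb{Z}/p\mathbb{Z}$, forcing $P$ to be $W^p$ or a rotation of $(BW)^{p/2}$ (for $p$ even you should also allow the union of both cosets, which again gives $W^p$); all of these are non-primitive, so the desired $i$ exists. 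Your approach buys two things: it makes explicit exactly which necklaces fail (the constant and alternating ones, consistent with the separate treatment of $BW$ in Section~\ref{sec:BW-limit}), and it correctly isolates the property that is actually used downstream in Claim~\ref{claim:limit-deg-function} --- that the last entry of the chosen rotation is $1$, so that by Remark~\ref{rmk:bracket-root} the second and third parts are unbracketed and the first part is the unique playable one --- a point the displayed form of the claim leaves implicit. The paper's construction is more concrete and shorter, but leaves the exhaustiveness of its two cases unstated; your coset argument covers everything in one stroke.
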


\begin{proof}
    There are two cases: 
    \begin{enumerate}
    \item[1.] $P$ has two consecutive black beads, that means any $\gamma^{(t)} \in \mathcal{C}_P$ has either two consecutive $1's$ or a $1$ at the top and a $1$ at the bottom. Let $\gamma^{(t)}$ be of the form $\,\partition{1 \\ 0 \\ \vdots \\ 1}\,$, exists or else $P = B^p$ not primitive. Thus $\gamma^{(t)}_1 + p - 1 = p > p - 1 = l(\gamma^{(t)}) - 1$, so it is playable. Also, Claim~\ref{claim:limit-branch} admits that $\gamma^{(t)}_2$ is not playable.
    
    \item[2.] $P$ has two consecutive white beads, consider $\gamma^{(t)} = \partition{0 \\0 \\ \vdots \\ 1}$. Thus $\gamma^{(t)}_1$ is playable and Claim~\ref{claim:limit-branch} admits that $\gamma^{(t)}_2$ is not playable.
    \end{enumerate}
\end{proof}

  The third claim regards a special situation that separates types of playing sequences:
\begin{claim}\label{claim:limit-deg-function}
    Let $S$ be the subtree $S$ of the forest $\mathcal{F}_P$ which is rooted at the element 
    \begin{equation*}
        \lambda = \partition{\lambda[:j]\\ \play{\lambda_{j+1}} \\ 0 \\ \vdots} = \partition{\alpha\\ \gamma^{(t)}} =  \partition{\alpha \\ \play{\sigma} \\ 0 \\ \vdots \\ 1}
    \end{equation*}
    where $\alpha = \lambda[:j]$ has $\alpha_j \ge 1$, with $\sigma \in \{0, 1\}$ and $\partition{\play{\sigma} \\ 0 \\ \vdots \\ 1} = \gamma^{(t)}$ chosen as in Claim~\ref{claim:special-tree}.  Then $S$ has growth function
    \begin{equation*}
        h(x) = A(x) + B(x)g_t(x)
    \end{equation*}
    where $A, B \in \mathbb{Z}[x]$ of degree at most $j$. 
\end{claim}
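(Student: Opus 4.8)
The plan is to analyze the subtree $S$ rooted at $\lambda = \partition{\alpha \\ \gamma^{(t)}}$ by separating the playing sequences from $\lambda$ into two families according to whether they eventually "touch" the block $\alpha = \lambda[:j]$ or never do. The key structural input is Proposition~\ref{prop:BW-seq}-style reasoning together with Claim~\ref{claim:limit-branch} and Claim~\ref{claim:special-tree}: since $\gamma^{(t)}_2$ is \emph{not} playable (because $\gamma^{(t)} = \partition{\play{\sigma}\\0\\\vdots\\1}$ with $\gamma^{(t)}_p = 1$), the only playable part in the tail is $\gamma^{(t)}_1$, i.e. the part $\lambda_{j+1}$, which by rule 1 of Section~\ref{sec:rule} sends $\lambda[j+1:]$ from $\gamma^{(t)}$ to $\gamma^{(t+1)}$ and raises and brackets $\alpha$. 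So the immediate children of $\lambda$ are: (a) the children obtained by playing one of the $\le j$ bracketed parts inside $\alpha$ (or at the boundary, index $j$ or $j{+}1$ interactions), and (b) the single child obtained by playing $\lambda_{j+1}$, which has the form $\partition{\alpha+1 \\ \gamma^{(t+1)}}$ and again has exactly one tail-playable part. Iterating (b), the sequence $[ (j{+}1)^r ]$ — more precisely the appropriate reindexed version — produces a chain $\partition{\alpha + r\\ \gamma^{(t+r)}}$, and along this chain the only way to "return" to the forest's recurrent structure and pick up a full copy of $g_t$ is after the $\alpha$-block has been entirely consumed, by exactly the mechanism in Proposition~\ref{prop:BW-seq}.ii applied to this setting.

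The key steps, in order, are: (1) enumerate the finitely many playing sequences from $\lambda$ that stay "within $\alpha$ plus a bounded window" — since $\alpha$ has length $j$ and each $R_i$ brackets at most $3$ new parts (Proposition~\ref{prop:playable}), the number of relevant sequences and the levels at which they act are bounded by a function of $j$, contributing a polynomial $A(x) \in \mathbb{Z}[x]$ of degree at most $j$; (2) identify the \emph{unique} "escape" sequence that marches down through $\alpha$, consuming one entry per step in the style of $[1^r]$ from Proposition~\ref{prop:BW-seq}, until it reaches a vertex of the form $\partition{\gamma^{(t')}\\\gamma^{(t')}\\\vdots}$ (equivalently a root $\gamma^{(t')}$ of the forest), at which point the remaining subtree is exactly $G_{t'}$ with growth function $g_{t'}(x)$; (3) argue that because $\gamma^{(t)}$ was chosen so that only the first coordinate is playable, this escape is forced to end at a specific $\gamma^{(t')}$ — in fact at $\gamma^{(t)}$ itself after $\abs{\alpha}$ worth of steps, since the cyclic shift returns after $p$ tail-plays but the $\alpha$-consumption is what governs the power of $x$ — giving the term $B(x) g_t(x)$ with $B(x)$ a monomial-ish polynomial recording the level at which $G_t$ is grafted, of degree at most $j$; (4) assemble $h(x) = A(x) + B(x) g_t(x)$ by summing the contributions, using that the two families are disjoint and exhaust all vertices of $S$ (every vertex either lies in the finite "$\alpha$-region" or lies below the escape chain, hence inside a grafted copy of $G_t$).

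The main obstacle I expect is step (3): pinning down \emph{exactly} which $\gamma^{(t')}$ the escape chain lands on and at \emph{exactly} what level, and verifying that no \emph{other} playing sequence sneaks in a second, independent copy of some $g_s$ — i.e. that the coefficient of $g_t$ is a genuine polynomial in $\mathbb{Z}[x]$ and not itself an infinite series. This requires a careful bookkeeping argument that the only sub-branches which reattach to a root are the ones passing through the forced "$[1\ldots]$-type" chain; everything else terminates at a leaf within $O(j)$ steps because, by Claim~\ref{claim:no-3-inc} and Corollary~\ref{corr:diff-not-far}, the bracketed window above the tail cannot grow without bound once we stop feeding it via the tail-play, and since $\alpha$ is finite of length $j$ the feeding stops. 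Degree bounds of $j$ on $A$ and $B$ then drop out because all the relevant action — every play that is not part of the single escape chain, and the escape chain's length down to the first full root — is confined to within $j$ steps of the root $\lambda$.
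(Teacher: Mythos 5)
Your decomposition into a finite ``$\alpha$-region'' plus grafted copies of $g_t$ has the right shape, and your account of the polynomial part $A(x)$ (each play strictly above the tail deletes one part of $\alpha$, so such sequences have length at most $j$) matches the paper. The gap is in where the copies of $g_t$ come from. You locate them only at the end of a single ``escape chain'' that consumes all of $\alpha$ and lands on a literal root $\gamma^{(t')}$, in the style of the $[1^r]$ tail of Proposition~\ref{prop:BW-seq}.ii, and you predict $B(x)$ is essentially a monomial. That accounting misses almost all of $S$: for instance, play the tail part $\sigma=\lambda_{j+1}$ at the very first step and thereafter play only indices $\ge j+1$. The resulting states all carry the inert block $\alpha+1,\alpha+2,\ldots$ on top, so they never pass through a vertex of the form $\partition{\gamma^{(t')}\\ \vdots}$, nor do they stay within $O(j)$ of the root; they form an infinite subtree lying in neither of your two families, so the exhaustiveness claim in your step (4) fails.

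The paper's mechanism is the other one from the $BW$ analysis --- the $[2[\ge 2]]$ phenomenon, not the $[23\ldots r\,1^r]$ one. A copy of $G_t$ is grafted at every state $\partition{\beta\\ \gamma^{(t)}}$ from which $\sigma$ is played: the subtree generated by playing only indices in the tail region is isomorphic to $G_t$ via forgetting the top block $\beta$, which stays inert because its lowest part is at least $1$ (so it only gets incremented) and, by the choice of $\gamma^{(t)}$ in Claim~\ref{claim:special-tree}, the parts below $\sigma$ never become playable before $\sigma$ is. There is one such graft for each of the finitely many prefixes of plays inside $\alpha$ after which $\sigma$ is still bracketed; since each such prefix has length at most $j$, this is exactly what makes $B(x)$ a genuine polynomial of degree at most $j$ --- not a monomial, and not an infinite series, which resolves the concern you raise in step (3) without any uniqueness of an escape chain. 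Branches that re-enter $\beta$ after $\sigma$ has been played die at a leaf within $l(\beta)$ further steps and contribute only bounded-degree terms. Without this ``ignore the inert prefix'' isomorphism your argument cannot produce the term $B(x)g_t(x)$, so as written the proposal does not prove the claim.
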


\begin{proof}
    If $\play{\sigma}$ is bracketed, any parts of $\alpha$ are bracketed, due to rule $3$. Since $\lambda_j = \alpha_j \ge 1 > = 0 = \lambda_{j+2}$ and $\sigma$ is the only playable part in the segment $\gamma^{(t)}$, by the proof of Proposition~\ref{prop:playable} and rule $1$, the part $\lambda_{j+2} = \gamma^{(t)}_2 = 0$ cannot be playable any time before $\sigma$ is played, and the same for any entries below it.  Let $\delta$ be a playing sequence starting at $\lambda$. Let $ind_\sigma^\delta(t)$ be the index of $\sigma$ in $R_{\delta[:(t-1)]}(\lambda)$.    We have two cases:
    \begin{itemize}
        \item[1.] Play $\lambda \xrightarrow{\delta} R_\delta(\lambda)$ where $\delta_i \le ind_\sigma^\delta(i)$ for all $i$. Clearly, $l(\delta) \le j$ because we delete the played entry in each operation, by rule $2$. Let $A(x)$ be the growth function for this set of elements obtained from performing such playing sequences.
        \item[2.] Part $\sigma$ is played at some time; assume $\lambda \xrightarrow{\delta} R_\delta(\lambda)$ where $\delta_i = ind_\sigma^\delta(i)$. Because $\alpha_j \ge 1$, the rule $3$ confirms that in any states before playing $R_{\delta_i}$, the immediate part above the tail segment $\gamma^{(t)}$ is at least $1$. Hence $R_{\delta[:i]}(\lambda)$ results in $\partition{\beta \\ \gamma^{(t+1)}}$, in which the lowest part of $\beta$ is at least $2$ and we claim that $\beta$ has $j-i+1$ parts.  The reason for this claim is because we erase one part of $\alpha$ each time we play $\delta[:(i-1)]$. Thus playing any parts of $\beta$ cannot make the $0$ at top of $\gamma^{(t+1)}$ playable due to rule $2$. From then, if any $\delta_{i+s} < \delta_i$ for some $s>0$, that is, a part of the segment $\beta$ is played, then any parts below $\beta$ are not playable, then the rest of $\delta$ only plays parts of $\beta$ and leads to a leaf. 
        
        The playing sequences $\delta$ in this case take $i-1$ levels to reach $\gamma^{(t)}$. From then, playing $[\ge \delta_i]$ forms a copy of $\gamma^{(t)}$, while playing entries of $\beta$ at any time reaches a leaf in at most $j - i + 1$ plays. Finally, these playing sequences contribute $B(x)g_t(x)$ for $B \in \mathbb{Z}[x]$ of degree $j$.  
    \end{itemize}
\end{proof}

\begin{proof}[Proof of \textbf{Theorem~\ref{thm:denom-deg}}]
We only need to connect the dots by proving that every branch of the tree $G_s$ rooted at $\gamma^{(s)}$  will eventually hit the element of the form $\lambda$ as in Claim~\ref{claim:limit-deg-function}. In $\mathcal{C}_P$,
one cycles the positions of the $1$'s to obtain the successive elements $\gamma^{(s)}$, so in any $G_s$ with $s \neq t$, the first part $\gamma^{(t)}_1$ is at some position $\gamma^{(s)}_{j+1}$ ($1 \le j \le p-1$). We can append some entries at the end of $\gamma^{(s)}$ so that the tail is $\gamma^{(t)}$. Hence the Claim~\ref{claim:limit-deg-function} applies, with $j$ at most $p-1$.

Furthermore, $g_t(x) = 1 + xg_{t+1}(x)$ because $\gamma^{(t)}_1$ is the only playable part in $\gamma^{(t)}$. Now there are polynomials $K, L$ of degree at most $p-1$ satisfying
\begin{equation*}
    g_{t+1}(x) = K(x) + L(x)g_t(x) = K(x) + L(x)(1 + xg_{t+1}(x)) = K(x) + L(x) + xL(x)g_{t+1}
\end{equation*}
implying there are polynomials $P, Q$ of degree at most $p$ such that
\begin{equation*}
    g_{t+1}(x) = \dfrac{M(x)}{N(x)}.
\end{equation*}
Then $g_t(x)$ is rational with denominator of degree at most $p$ and numerator of degree at most $p+1$. Specifically,
\begin{equation*}
    g_t(x) = \dfrac{N(x) + xM(x)}{N(x)}.
\end{equation*}
Any other $g_s(x)$ where $s \not \in \{t, t+1\}$ is of the form 
\begin{equation*}
    g_s(x) = A(x) + B(x)g_t(x) = \dfrac{AN+BN+xBM}{N},
\end{equation*}
where $A, B$ are polynomials of integer coefficients of degree at most $p-2$. Thus any such $g_s$ is rational with denominator of degree at most $p$ and numerator of degree at most $2p-1$. Let $g$ be the growth function of the $P$ quasi-infinite forest, then $g = \sum_{s=1}^p g_s$ is rational with denominator $Q(x)$ and numerator of degree at most $2p - 1$. 

Now we want to discard a copy of the forest coming from the branches $[1\ldots]$ in each tree $G_s$ to get the limit generating function $H_P$ of \rebs{P^\infty}. Thus 
\begin{equation*}
    g(x) = H_P(x) + xg(x)
\end{equation*}
and then 
\begin{equation*}
    H_P(x) = (1-x)g(x)
\end{equation*}
a rational function with denominator of degree at most $p$ and numerator of degree at most $2p$.

\end{proof}

\section{Further discussion and conjectures on the finite BS systems}

  The precise limiting generating functions $H_{BWW}$ and $H_{BBW}$ were computed in detail the author's bachelor thesis \cite[\S 5]{pham2022bs}. The limiting generating functions for primitive necklaces of greater length can be easily computed in that fashion. Below are some of them that were computed by hand:
\begin{align*}
    H_{BWW}(x) = H_{BBW}(x) & = (1-x)\dfrac{x^3-3x^2-4x-3}{2x^3+x^2-1},\\
    H_{BWWW}(x) &= (1-x)\dfrac{x^5+8x^4-3x^3-8x^2-6x-4}{6x^4+4x^3+x^2-1},\\
    H_{BBBW}(x) &= (1-x)\dfrac{2x^5+8x^4-5x^3-10x^2-7x-4}{6x^4+4x^3+x^2-1}, \\
    H_{BBWW}(x) &= (1-x)\dfrac{x^3+x^2+x+1}{3x^4+2x^3+x^2-1}, \\
    H_{BWWWW}(x) &= (1-x)\dfrac{2x^6+16x^5-12x^4-23x^3-16x^2-8x-5}{12x^5+8x^4+2x^3-1}.
\end{align*}
They led us to conjecture in addition to Theorem~\ref{thm:denom-deg} that the denominator degree of $H_P$ is exactly $\abs{P}$.

 Another interesting question about the Bulgarian Solitaire dynamical system is the sizes of the orbits, which are parametrized by necklaces as discussed in Section~\ref{sec:1}. Recall that if $N = P^k$ for some primitive necklaces $P$ of length $p$, an element $\lambda$ in the recurrent set $\bm{C}_{P^k}$ is of the form $\lambda = (\gamma^{(t)})^k + \Delta_{pk - 1}$ for some $\gamma^{(t)} \in \mathcal{C}_{P}$. Since the number of $1$'s in $\gamma^{(t)}$ is equal to the number of black beads in $N$, we have that the size of the partition $n$ that the Bulgarian Solitaire operation acts on is
\begin{equation*}
    n = \binom{pk}{2} + k \cdot \#\text{black beads of }P
\end{equation*}

As we know, when $N = W^k$, the orbit $\mathcal{O}_{W^k}$ is actually the whole Bulgarian Solitaire system on the partition set $\mathcal{P}\left( \binom{k}{2} \right)$. The author's bachelor thesis \cite{pham2022bs} computed the sizes of orbits parametrized by necklaces of the form $(BW)^k, (BBW)^k$ and $(BWW)^k$. One of the results involve the sequence of \emph{Chebyshev polynomials of the first kind} $\{T_k(x)\}_{k=0}^\infty$ evaluated at $x = 2$, which satisfies the recurrent formula
\begin{align*}
    T_0(2) &= 1 \\
    T_1(2) &= 2 \\
    T_{k}(2) &= 4T_{k-1}(2) - T_{k-2}(2) \qquad \text{for } k \ge 2.
\end{align*}
The results are the theorem below:
\begin{theorem}[Pham \cite{pham2022bs}]
    For each $k = 1, 2, \ldots$, one has
    \begin{align*}
 %   \begin{cases}
        \abs{\mathcal{O}_{(BW)^k}} &= T_k(2) \\
        \abs{\mathcal{O}_{(BWW)^k}} &= 5^k \\
        \abs{\mathcal{O}_{(BBW)^k}} &= 7 \cdot 5^{k-1}
%    \end{cases}
    \end{align*}
\end{theorem}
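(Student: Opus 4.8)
The plan is to compute $\abs{\mathcal{O}_{P^k}} = \mathcal{D}_{P^k}(1)$, the number of vertices of the reverse digraph $\mathcal{O}^\opp_{P^k}$, by exploiting its forest structure and setting up a recursion in $k$. By the analysis of Sections~\ref{sec:rule}--\ref{sec:3}, $\mathcal{O}^\opp_{P^k}$ consists of the recurrent cycle (exactly $p$ vertices) together with $p$ rooted trees $G^{(k)}_1,\dots,G^{(k)}_p$, where $G^{(k)}_t$ hangs off the recurrent element $\bigl((\gamma^{(t)})^k\bigr)^+$ and gathers every transient partition whose forward orbit first meets the cycle there. Since each transient element lies in exactly one tree and the $p$ recurrent elements are exactly the $p$ roots,
\begin{equation*}
    \abs{\mathcal{O}_{P^k}} = \sum_{t=1}^{p} \abs{G^{(k)}_t},
\end{equation*}
counting each root once. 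So it suffices to track the vector $\bigl(\abs{G^{(k)}_1},\dots,\abs{G^{(k)}_p}\bigr)$ as $k$ varies. (Note that the limiting generating function $H_P$ is of no help here, since convergence is only coefficient-wise and $\mathcal{D}_{P^k}(1)\to\infty$ while $H_P(1)=0$.)

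The engine for the recursion is the block structure of Section~\ref{sec:rule}: in $\mathcal{O}^\opp_{P^k}$ every element is a prefix of the $k$-fold periodic vector $(\gamma^{(s)})^k$; a reverse move of type~2 (above the tail) changes neither the tail nor the number of unused blocks and, by Proposition~\ref{prop:playable}, creates at most three new bracketed parts, whereas a move of type~1 (inside the tail) consumes one $\gamma^{(s)}$-block and, by Proposition~\ref{prop:play-tail}, rotates the tail $\gamma^{(s)}\mapsto\gamma^{(s+1)}$. Thus any branch of $G^{(k)}_t$ alternates bounded type-2 excursions — whose local shapes stabilize once $k$ passes a small threshold, as in Proposition~\ref{prop:ignore-upper-parts} — with tail-consuming type-1 steps, and each type-1 step reattaches a subtree isomorphic to $G^{(k-1)}_{t'}$ for the rotated index $t'$. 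Assembling the stabilized local pieces with the recursive attachments produces a linear recursion
\begin{equation*}
    \mathbf{v}^{(k)} = M\,\mathbf{v}^{(k-1)} + \mathbf{c},
\end{equation*}
valid past a small starting index, where $\mathbf{v}^{(k)} = \bigl(\abs{G^{(k)}_1},\dots,\abs{G^{(k)}_p}\bigr)^\top$, $M \in \mathbb{Z}_{\ge 0}^{p\times p}$ records how many type-1 edges run from the $t$-th tree to the $t'$-th tree, and $\mathbf{c}$ collects the stabilized local counts.

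Carrying this out for the three named necklaces: for $P = BW$ ($p=2$) the tree off $\gamma^{(1)}$ is trivial (in $\mathcal{F}_{BW}$ of Figure~\ref{fig:BW-quasi-tree} the only move from $\gamma^{(1)}$ is the cycle edge $\gamma^{(1)}\xrightarrow{1/2}\gamma^{(2)}$), all growth sits in $G^{(k)}_2$, and enumerating the branch types of Table~\ref{tab:BW-quasi} truncated to $k$ blocks (via Proposition~\ref{prop:BW-seq}) gives the homogeneous second-order recursion $\abs{\mathcal{O}_{(BW)^k}} = 4\abs{\mathcal{O}_{(BW)^{k-1}}} - \abs{\mathcal{O}_{(BW)^{k-2}}}$; with the base values $\abs{\mathcal{O}_{BW}} = 2 = T_1(2)$ and $\abs{\mathcal{O}_{(BW)^2}} = 7 = T_2(2)$ read off Figure~\ref{fig:BS-8}, this forces $\abs{\mathcal{O}_{(BW)^k}} = T_k(2)$. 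For $P = BWW$ and $P = BBW$ ($p=3$) the same bookkeeping collapses: two of the three trees stay bounded and only the ``special'' tree rooted at the $\gamma^{(t)}$ of Claim~\ref{claim:special-tree} grows, so that the constant term in the big-tree recursion and the bounded contributions of the other two trees combine to make the \emph{total} obey the first-order recursion $\abs{\mathcal{O}_{P^k}} = 5\abs{\mathcal{O}_{P^{k-1}}}$; together with $\abs{\mathcal{O}_{BWW}} = 5$ and $\abs{\mathcal{O}_{BBW}} = 7$ (read off the length-$3$ systems on $\mathcal{P}(4)$ and $\mathcal{P}(5)$, each a single orbit), this yields $5^k$ and $7\cdot 5^{k-1}$.

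The main obstacle is establishing the self-similar decomposition precisely: showing that each type-1 edge off $G^{(k)}_t$ attaches an \emph{exact} isomorphic copy of $G^{(k-1)}_{t'}$ with the correct rotated index and the correct bracketing, that the type-2 local portions really do stabilize, and in particular that nothing unexpected happens at the bottom of the finite vector, where the last block may be incomplete and the bracketing of Remark~\ref{rmk:bracket-root} must be re-checked by hand; this amounts to pinning down $M$, $\mathbf{c}$, and the exact starting index of the recursion. Once that is done, solving the recursion and matching the hand-computed base cases is routine arithmetic. As an alternative to the recursion step one could instead construct an explicit bijection of $G^{(k)}_t$ with bounded-alphabet strings, which would make the $5^k$ and $7\cdot 5^{k-1}$ formulas (and the appearance of $T_k(2)$ as a transfer-matrix count) transparent.
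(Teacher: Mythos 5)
You should first note that the paper contains no proof of this statement to compare against: it is imported verbatim from the bachelor thesis \cite{pham2022bs}, so the theorem is cited, not proved, in this article. Judged on its own terms, your proposal is a sensible strategy outline rather than a proof. The decomposition of $\mathcal{O}^{\opp}_{P^k}$ into the $p$-cycle plus $p$ rooted trees, the idea of a linear recursion $\mathbf{v}^{(k)} = M\mathbf{v}^{(k-1)} + \mathbf{c}$ driven by type-1 (tail-consuming) versus type-2 (local) moves, the observation that $H_P$ is useless for counting vertices since convergence is only coefficient-wise, and all of your base values and target recursions ($\abs{\mathcal{O}_{BW}}=2$, $\abs{\mathcal{O}_{(BW)^2}}=7$, $T_k(2)=4T_{k-1}(2)-T_{k-2}(2)$, $\abs{\mathcal{O}_{BWW}}=5$, $\abs{\mathcal{O}_{BBW}}=7$, ratio $5$) are correct. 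But the entire mathematical content sits in the step you explicitly defer: actually deriving $M$ and $\mathbf{c}$, i.e., proving that the truncated branch enumeration yields exactly those recursions. As written, nothing beyond the arithmetic of the answer is established.

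There is also a concrete error in the mechanism you propose for $p=3$. The claim that ``two of the three trees stay bounded and only the special tree grows'' contradicts the paper's own limiting analysis: the relations $g_t(x) = 1 + x\,g_{t+1}(x)$ and $g_s(x) = A(x) + B(x)g_t(x)$ with $B \neq 0$ (proof of Theorem~\ref{thm:denom-deg}) chain all $p$ growth functions together, so if any one tree of $\mathcal{F}_P$ is infinite then all of them are; and since the finite digraph \rebs{P^k} agrees with the limit up to level $k$, every finite tree $G^{(k)}_t$ has size growing without bound in $k$. The first-order recursion $\abs{\mathcal{O}_{P^k}} = 5\abs{\mathcal{O}_{P^{k-1}}}$ must therefore emerge from the spectral structure of the full $3\times 3$ transfer data (e.g., $5$ being the dominant eigenvalue with the initial vector lying in a suitable invariant subspace), not from two of the trees being negligible, and your bookkeeping for $BWW$ and $BBW$ would need to be redone on that basis. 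The $BW$ case is in better shape, since there Proposition~\ref{prop:BW-seq} and Table~\ref{tab:BW-quasi} genuinely reduce everything to the single tree $G_2$, but even there the passage from the infinite forest to the $k$-block truncation (which branches survive, and where they are cut) is asserted rather than carried out.
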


We also conjectured that orbits parametrized by $P^k$ for primitive necklaces $P$ of length greater than $3$ grow geometrically as well. Some data for primitive necklaces of length $4$ and $5$ are given in \cite[Section 3.1]{pham2022bs}.
\begin{conj}\label{conj:reccurent-orb-size}
    For any primitive necklace $P$ with $|P| \geq 3$,
    there is an integer $c_P$ such that
    for $k \geq 2$,
    \begin{equation*}
         \abs{\mathcal{O}_{P^k}} = (c_P)^{k-1} \abs{\mathcal{O}_P} 
    \end{equation*}
     Moreover, when $P$ and $P'$ are related by swapping black beads to white beads, then $c_P = c_{P'}.$
\end{conj}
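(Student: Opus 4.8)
The plan is to reduce the count $\abs{\mathcal{O}_{P^k}} = \mathcal{D}_{P^k}(1)$ to a weighted path count in the quasi-infinite forest $\mathcal{F}_P$. Since reversing arrows does not change the vertex set, $\abs{\mathcal{O}_{P^k}}$ is the number of vertices of $\mathcal{O}^\opp_{P^k}$, which decomposes as $\sum_{t=1}^p \abs{G_t^{(k)}}$, where $G_t^{(k)}$ is the tree hanging off the recurrent element $(\gamma^{(t)})^k$ in the finite digraph; each vertex of $G_t^{(k)}$ is labelled by a unique legal reverse playing sequence $\sigma$ from $\gamma^{(t)}$ that does not begin with $R_1$. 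The first step is to make precise a ``block-budget'' criterion: using Proposition~\ref{prop:ignore-upper-parts} and Claim~\ref{claim:no-3-inc}, show that such a $\sigma$ is legal in $\mathcal{O}^\opp_{P^k}$ if and only if it is legal in $\mathcal{F}_P$ and a statistic $w(\sigma)$ --- measuring how many of the $k$ available period blocks $\sigma$ actually consumes (roughly, the largest index of a part ever played along $\sigma$) --- satisfies $w(\sigma)\le k-O(1)$. This turns $\abs{\mathcal{O}_{P^k}}$ into a coefficient of a fixed rational generating function.

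Next I would encode the legal playing sequences of $\mathcal{F}_P$ by a finite automaton whose states record the current tail root $\gamma^{(s)}$ together with the bracketing pattern in the bounded window just above the tail (bounded by Proposition~\ref{prop:playable}), with each transition carrying the increment of $w$. Then $\sum_{\sigma}y^{w(\sigma)}$ is rational, built from $(\mathrm{Id}-yM)^{-1}$ for the transfer matrix $M$, and $\abs{\mathcal{O}_{P^k}}$ equals $[y^{k-O(1)}]$ of $(1-y)^{-1}$ times this rational function. The crux is to show that, restricted to the states visited infinitely often (the ``recurrent core'' of $M$), this rational function has the shape $\dfrac{a}{1-c_P y}+(\text{polynomial})$, so that the coefficient equals $c_P^{\,k-1}\abs{\mathcal{O}_P}$ exactly for $k\ge 2$. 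Here the hypothesis $\abs{P}\ge 3$ enters through Claim~\ref{claim:special-tree}, which isolates a tree $G_t$ whose root has a single playable part: combined with the recursion $g_t(x)=1+xg_{t+1}(x)$ and Claim~\ref{claim:limit-deg-function}, this should force the recurrent core of $M$ to be governed by one eigenvalue, and integrality of $c_P$ then follows from $M$ being a nonnegative integer matrix whose dominant invariant line is rational.

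For the symmetry statement, note that swapping black and white beads does \emph{not} preserve $\mathcal{O}^\opp_{P}$ itself --- indeed $\abs{\mathcal{O}_{BBW}}=7\ne 5=\abs{\mathcal{O}_{BWW}}$ --- so one cannot hope for a size-preserving bijection. Instead, complementing the $\{0,1\}$-pattern of every difference labelling sends $\mathcal{C}_P$ to $\mathcal{C}_{P'}$, and the plan is to show it induces an isomorphism between the recurrent cores of the two automata (the finitely many low-level vertices, where the two digraphs genuinely differ, lie outside the core), for instance by matching the two families of deep self-similar branches --- possibly mediated by Young-diagram conjugation, which interacts with $\beta$ in a controlled way. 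Equal cores give $c_P=c_{P'}$.

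The main obstacle is the collapse to a \emph{first-order} recursion: one must prove not merely $\abs{\mathcal{O}_{P^k}}\sim c_P^{\,k}$ but the exact identity $\abs{\mathcal{O}_{P^k}}=c_P^{\,k-1}\abs{\mathcal{O}_P}$ for all $k\ge 2$, together with $c_P\in\mathbb{Z}$. Plain spectral asymptotics only give $\abs{\mathcal{O}_{P^k}}=c_P^{\,k}(\alpha+o(1))$ with $c_P$ a priori algebraic; promoting this to the exact formula needs the dominant invariant subspace of $M$ to be one-dimensional and defined over $\mathbb{Q}$. An alternative I would try first, which would deliver both the exact formula and integrality at once, is to construct an explicit block-insertion bijection $\mathcal{O}_{P^{k+1}}\longleftrightarrow\{1,\dots,c_P\}\times\mathcal{O}_{P^k}$ directly on Young diagrams (or on difference labellings) for $k\ge 2$.
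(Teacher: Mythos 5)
This statement is left as an open conjecture in the paper --- the author offers only computational evidence from \cite{pham2022bs} and no proof --- so there is nothing to compare your argument against, and your proposal does not close the gap either: it is a strategy outline whose decisive steps are asserted rather than established. The first genuine gap is the ``block-budget'' equivalence. You need that a reverse playing sequence is legal in $\mathcal{O}^\opp_{P^k}$ if and only if it is legal in $\mathcal{F}_P$ and consumes at most $k-O(1)$ period blocks, and that this cutoff is \emph{exact}. But the paper (following Eriksson--Jonsson) only proves coincidence of \rebs{P^k} with the pruned quasi-infinite forest \emph{up to level} $k$ (Theorem~\ref{thm:fin-coincide} and the discussion closing Section~\ref{sec:3}); the total orbit size $\abs{\mathcal{O}_{P^k}}$ is dominated by vertices far beyond that range, precisely where the finite digraph deviates from $\mathcal{F}_P$ (already for $W^k$ the coincidence holds only up to level $\lfloor k/2\rfloor$, and the deep levels of the finite tree are not governed by the infinite one). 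Turning $\abs{\mathcal{O}_{P^k}}$ into a clean coefficient extraction from a fixed transfer matrix therefore requires an analysis of the ``boundary'' behavior that neither the paper nor your sketch supplies.

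The second gap is the one you name yourself: even granting a rational transfer-matrix encoding, the conjecture asserts the \emph{exact} first-order recursion $\abs{\mathcal{O}_{P^{k+1}}}=c_P\abs{\mathcal{O}_{P^k}}$ for $k\ge 2$ with $c_P\in\mathbb{Z}$, which is strictly stronger than any spectral asymptotic; saying the recurrent core ``should'' be governed by a single rational eigenvalue restates the conjecture rather than proving it, and Claims~\ref{claim:special-tree} and~\ref{claim:limit-deg-function} control the level generating functions $g_t(x)$ of the \emph{infinite} forest, not the finite counts at $x=1$ (coefficientwise convergence of $\mathcal{D}_{P^k}$ to $H_P$ says nothing about $\mathcal{D}_{P^k}(1)$). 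Finally, the symmetry step is also unsubstantiated: complementing the $\{0,1\}$-labels sends $\mathcal{C}_P$ to $\mathcal{C}_{P'}$ as necklaces, but it does not commute with the reverse moves or preserve playability --- Remark~\ref{rmk:bracket-root} shows the bracketing rule treats $0$'s and $1$'s asymmetrically --- and Young-diagram conjugation has no simple intertwining relation with $\beta$. Your proposed block-insertion bijection $\mathcal{O}_{P^{k+1}}\leftrightarrow\{1,\dots,c_P\}\times\mathcal{O}_{P^k}$ is, I think, the right target, but it is exactly the missing content, not a consequence of what you have written.
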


\nocite{*}
\bibliographystyle{abbrv}

\end{document}